\providecommand\@dotsep{5}
\def\listtodoname{List of Todos}
\def\listoftodos{\@starttoc{tdo}\listtodoname}
\numberwithin{equation}{section}
\def\dis{\displaystyle}
\def\cal{\mathcal}
\newtheorem{theorem}{Theorem}[section]
\newtheorem{lemma}[theorem]{Lemma}
\newtheorem{proposition}[theorem]{Proposition}
\providecommand{\abs}[1]{\lvert#1\rvert}
\providecommand{\norm}[1]{\lVert#1\rVert}
\DeclareMathOperator{\cat}{cat}
\title[fractional Schr\"odinger-Poisson system]
{Positive semiclassical states  for a \\ fractional Schr\"odinger-Poisson system}
\author[E. G. Murcia]{ Edwin Gonzalo Murcia}
\author[G. Siciliano]{Gaetano Siciliano}
\address[E. G. Murcia, G. Siciliano ]{\newline\indent Departamento de Matem\'atica
\newline\indent 
Instituto de Matem\'atica e Estat\'istica
\newline\indent 
 Universidade de S\~ao Paulo 
\newline\indent 
Rua do Mat\~ao 1010,  05508-090 S\~ao Paulo, SP, Brazil }
\email{\href{mailto:edwingmr@ime.usp.br}{edwingmr@ime.usp.br}}
\email{\href{mailto:sicilian@ime.usp.br}{sicilian@ime.usp.br}}
\thanks{Edwin G. Murcia is
supported by  Capes, Brazil. Gaetano Siciliano  is supported by
Fapesp and CNPq, Brazil. }
\subjclass[2000]{35J50, 35Q40, 58E05,}
\keywords{Fractional Laplace equation, multiplicity of solutions, Ljusternick-Schnirelmann category}
\begin{document}

\maketitle

\begin{abstract}
We consider a fractional Schr\"odinger-Poisson system in the whole space $\mathbb R^{N}$
in presence of a positive potential and depending on a small positive parameter $\varepsilon.$ We 
show that, for suitably small $\varepsilon$ (i.e. in the ``semiclassical limit'') the number of positive solutions is estimated below by the
Ljusternick-Schnirelmann category of the set of minima of the potential.

\end{abstract}



\section{Introduction}
In the last decades a great attention has been given to the following Schr\"odinger-Poisson type system
\begin{equation*}
\left\{
\begin{array}{l}
-\Delta u + V(x)u+\phi u = |u|^{p-2}u \\
-\Delta \phi = u^{2},
\end{array}
\right.
\end{equation*}
which arises
 in non relativistic Quantum Mechanics. Such a system is obtained by looking  for standing waves solutions
 in the purely electrostatic case to the  Schr\"odinger-Maxwell system. For a deduction of this system, see e.g. \cite{BF}.
 Here
 the unknowns are $u$, the modulus of the wave function, and $\phi$ which represents the electrostatic potential.
 $V$ is a given external potential and $p\geq2$ a suitable given number.

The system has been studied by many authors, both in bounded and  unbounded domains, with different assumptions on the data involved: boundary conditions, potentials, nonlinearities; many different type of solutions have been encountered
(minimal energy, sign changing, radial, nonradial...),     the behaviour
of the solutions (e.g.  concentration phenomena)  has been studied as well as 
 multiplicity results have been obtained.
It is really difficult to give a complete
list of references: the reader may see \cite{BFort} and the references therein.

However  it seems that  results relating the number of positive solutions 
with topological invariants of the ``objects'' appearing in the problem
are few in the literature.
We cite the paper \cite{Sicilia} where the system
is studied in a (smooth and) bounded domain $\Omega\subset  \mathbb R^{3}$ with $u=\phi = 0$ on $\partial\Omega$
and $V$ constant.
It is shown, by 
using 
variational methods
that, whenever $p$ is sufficiently near the critical Sobolev exponent $6$, the number of positive solutions is estimated below by the {\sl Ljusternick-Schnirelamnn category}
of the domain $\Omega$. 

\medskip

On the other hand it is known that a particular interest has the {\sl semiclassical limit} of the Schr\"odinger-Poisson system
(that is when the Plank constant $\hbar$ appearing in the system, see e.g. \cite{BF}, tends to zero) especially due to the fact
that this limit describes the transition from Quantum
to Classical Mechanics. Such a situation is studied e.g. in
\cite{R}, among many other papers.
We cite also Fang and Zhao  \cite{fangzhang} which consider
 the following doubly perturbed system
 in the whole space $\mathbb R^{3}$:
\begin{equation*}\label{fzproblem}
\left\{
\begin{array}{l}
- \varepsilon^{2}\Delta w+V(x)w+\psi w=|w|^{p-2}w \smallskip \\
-\varepsilon\Delta \psi= w^2 .
\end{array}
\right.
\end{equation*}
Here $V$  is a suitable potential,  $4<p<6$,
and $\varepsilon$ is a positive parameter proportional to $\hbar$.
In this case the authors estimate, whenever $\varepsilon$ tends to zero, 
the number of positive solutions by the 
Ljusternick-Schnirelamnn category of the set of minima of the potential $V$,
obtaining a result in the same spirit of \cite{Sicilia}.

\medskip

Recently, especially  after the formulation of the Fractional Quantum Mechanics, the derivation of the Fractional Schr\"odinger 
equation given  by N. Laskin  in  \cite{L1,L2,L3}, and the notion of fractional harmonic extension of a function studied in the 
pioneering  paper \cite{CSil}, equations involving fractional operators are receiving a great attention. Indeed pseudodifferential
operators appear in many problems in Physics and Chemistry, see e.g. \cite{MK1,MK2}; but also in
obstacle problems \cite{MS, S}, optimization and finance \cite{CT}, conformal geometry and minimal surfaces
\cite{CVald,CGonz,CRS}, etc.

\bigskip

Motivated by the previous discussion, we investigate in this paper the existence of  positive solutions for the following 
doubly singularly perturbed fractional Schr\"odinger-Poisson system
in $\mathbb R^{N}$:
\begin{equation}\label{prob1}\tag{$P_{\varepsilon}$}
\left\{
	\begin{array}{l}
		\varepsilon^{2s}\left(-\Delta \right)^{s}w+V(x)w+\psi w=f(w) \smallskip\\
		\varepsilon^{\theta}\left(-\Delta \right)^{\alpha /2} \psi= \gamma_{\alpha} w^{2},
	\end{array}
\right.
\end{equation}
where $\gamma_{\alpha} :=
 \frac{\pi ^{N/2} 2^\alpha \Gamma \left( \alpha /2\right)}{\Gamma \left( N/2-\alpha /2\right)}$
is a  constant ($\Gamma$ is the Euler function).
By a positive solution of \eqref{prob1} we mean a pair $(w,\psi)$ where $w$ is positive.
To the best of our knowledge, there are only few  recent papers dealing with a system like \eqref{prob1}: in \cite{Zhang}
the author deals with $\varepsilon=1$ proving under suitable assumptions on $f$ the existence of infinitely many 
(but possibly sign changing) solutions
by means of the Fountain Theorem. A similar system is studied in \cite{Wei} and the existence of
infinitely many  (again, possibly sign changing) solutions is obtained by means of the Symmetric Mountain Pass Theorem.

\medskip

In this paper we  assume that \medskip
\begin{enumerate}[label=(H\arabic*),ref=H\arabic*,start=1]
\item\label{H} $s\in (0,1)$, $\alpha\in (0,N)$, $\theta \in (0, \alpha)$, $N\in (2s, 2s+\alpha)$,
\end{enumerate}
\medskip
moreover the potential $V$ and the nonlinearity $f$ satisfy the assumptions listed below: \smallskip
\begin{enumerate}[label=(V\arabic*),ref=V\arabic*]
\item\label{V} $V:\mathbb R^{N}\to \mathbb R$ is a continuous function and
$$0< \min_{\mathbb R^{N}} V:=V_{0} < V_{\infty}:=\liminf_{|x|\to +\infty} V\in(V_{0},+\infty];$$
\end{enumerate}

\begin{enumerate}[label=(f\arabic*),ref=f\arabic*,start=1]
\item\label{f_{1}} $f: \mathbb R \rightarrow \mathbb R $ is a function of class $C^{1}$ and
$f(t)=0$ for $t\leq0$; \medskip
\item\label{f_{2}} $\lim_{t \rightarrow 0} {f(t)/t}=0$; \medskip 
\item\label{f_{3}}  there is $q_{0}\in  (2, 2^{*}_{s}-1)$ such that $\lim_{t\rightarrow \infty}{f(t)}/{t^{q_{0}}}=0$,
 where $2_{s}^{*}:=2N/(N-2s)$; \medskip
\item\label{f_{4}}  there is $K>4$ such that $0<K F(t):=K\int^{t}_{0}f(\tau)d\tau\leq tf(t)$ for all $ t>0$; \medskip 
\item\label{f_{5}}  the function $ t\mapsto f(t)/t^3$ is strictly increasing in $(0,+\infty).$
\end{enumerate}

\medskip

The assumptions on the nonlinearity $f$ are quite standard in order to work with variational methods, use
the Nehari manifold and the Palais-Smale condition. 
The assumption \eqref{V}  
will be fundamental in order to estimate the number of positive solutions and also to recover some compactness.

We recall, once for all, that a $C^{1}$ functional $\mathcal J$, defined on a smooth manifold $\mathfrak M$, is said to satisfy the Palais-Smale condition at level $c\in \mathbb R$
($(PS)_{c}$ for brevity) if every sequence $\{u_{n}\}\subset \mathfrak M$ such that 
\begin{equation}\label{eq:PSseq}
\mathcal J(u_{n})\to c\quad \text{  and } \quad \mathcal J'(u_{n})\to 0
\end{equation} 
has a convergent subsequence. 
A sequence $\{u_{n}\}$ satisfying \eqref{eq:PSseq} is also named a $(PS)_{c}$ sequence.

\medskip

To stay our result let us introduce 
$$
M:=\Big\{x\in\mathbb R^{N}:V(x)=V_{0}\Big\}
$$
 the set of minima of $V$. Our result is the following
\begin{theorem}\label{th:main}
Under the above assumptions \eqref{H}, \eqref{V}, \eqref{f_{1}}-\eqref{f_{5}}, there exists an $\varepsilon^{*}>0$
such that for every $\varepsilon\in (0,\varepsilon^{*}]$ problem \eqref{prob1} possesses at least $\cat M$ positive solutions.

Moreover if $\cat M>1$ and $M$ is bounded, then (for suitably small $\varepsilon$) there exist at least $\cat M+1$ positive solutions.
\end{theorem}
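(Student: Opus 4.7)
The plan is to follow the classical Benci--Cerami scheme adapted to the nonlocal setting. First, I would eliminate $\psi$ from the system: for fixed $w$, the second equation has a unique solution $\psi_w^\varepsilon$ given by an explicit Riesz-type convolution (with a factor depending on $\varepsilon^\theta$), and substituting it into the first equation yields a single nonlocal equation whose variational formulation is an energy functional $J_\varepsilon$ on the fractional Sobolev space $H^s(\mathbb R^N)$ (weighted by $V$). After the change of variable $u(x)=w(\varepsilon x)$, the problem becomes $(-\Delta)^s u + V(\varepsilon x)u + \phi_u u = f(u)$ with $\phi_u$ the analogous Riesz term, and I would work on the corresponding Nehari manifold $\mathcal N_\varepsilon$. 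Assumptions \eqref{f_{4}}, \eqref{f_{5}} ensure that $\mathcal N_\varepsilon$ is a $C^1$ manifold of Nehari type with each ray meeting it once, so that $c_\varepsilon:=\inf_{\mathcal N_\varepsilon}J_\varepsilon$ has mountain-pass characterization.

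Next I would analyze the autonomous limit problem obtained by replacing $V(\varepsilon x)$ by a constant $a\in[V_0,V_\infty]$; call its ground-state level $m_a$. The map $a\mapsto m_a$ is strictly increasing and continuous, and I would show that $c_\varepsilon\to m_{V_0}$ as $\varepsilon\to 0$. The assumption \eqref{V} together with the restriction \eqref{H} (which guarantees that the convolution term is of lower order than the $H^s$ norm and hence that $(PS)$ sequences are bounded and the Poisson term behaves nicely under weak convergence) is used to prove that $J_\varepsilon$ satisfies $(PS)_c$ for every $c<m_{V_\infty}$. This compactness below the $V_\infty$-threshold is the main analytical obstacle: one must rule out vanishing via the nonlocal Br\'ezis--Lieb type splitting for the Riesz term and rule out the escape of mass to infinity via a concentration-compactness argument that uses $V_0<V_\infty$ crucially.

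With $(PS)$ at hand I would carry out the Ljusternik--Schnirelmann scheme. Fixing $\delta>0$ small and the $\delta$-neighbourhood $M_\delta$ of $M$, I construct an entry map $\Phi_\varepsilon\colon M\to \mathcal N_\varepsilon$ by taking a fixed ground state $U$ of the limit problem at level $V_0$, translating and truncating it around each $y\in M$ via $\Phi_\varepsilon(y)(x)=t_{\varepsilon,y}\,\eta(|\varepsilon x-y|)U\bigl((\varepsilon x-y)/\varepsilon\bigr)$ projected on $\mathcal N_\varepsilon$, and check that $J_\varepsilon(\Phi_\varepsilon(y))\to m_{V_0}$ uniformly in $y\in M$. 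Dually, I define a barycentre map $\beta_\varepsilon\colon\mathcal N_\varepsilon\to\mathbb R^N$ of the form $\beta_\varepsilon(u)=\int_{\mathbb R^N}\chi(\varepsilon x)|u|^2/\int |u|^2$ and show that, on a sublevel set $\mathcal N_\varepsilon^{m_{V_0}+h(\varepsilon)}$ with $h(\varepsilon)\to 0$, $\beta_\varepsilon$ takes values in $M_\delta$; a concentration argument shows that if the level is close to $m_{V_0}$ then a Lions-type profile decomposition forces the mass to concentrate around a point of $M$. The composition $\beta_\varepsilon\circ\Phi_\varepsilon$ is then shown to be homotopic to the inclusion $M\hookrightarrow M_\delta$, so by the standard category inequality $\cat_{\mathcal N_\varepsilon^{m_{V_0}+h(\varepsilon)}}(\mathcal N_\varepsilon^{m_{V_0}+h(\varepsilon)})\geq \cat_{M_\delta}(M)=\cat M$ for small $\delta$, and Ljusternik--Schnirelmann theory on the $C^1$-manifold $\mathcal N_\varepsilon$ yields $\cat M$ critical points of $J_\varepsilon$. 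Positivity follows from \eqref{f_{1}} and a maximum principle argument for $(-\Delta)^s$, and reversing the rescaling gives $\cat M$ positive solutions of \eqref{prob1}.

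For the last statement, when $\cat M>1$ and $M$ is bounded I would run the standard extra-solution argument: if all critical points of $J_\varepsilon$ on the sublevel set $\mathcal N_\varepsilon^{m_{V_0}+h(\varepsilon)}$ had produced exactly $\cat M$ solutions, then by deformation and the homotopy equivalence above this sublevel set would retract onto (a set homotopy-equivalent to) $M$; one then shows that $\Phi_\varepsilon(M)$ contracts in a slightly higher sublevel set $\mathcal N_\varepsilon^{m_{V_0}+h(\varepsilon)+\sigma}$ but not in $\mathcal N_\varepsilon^{m_{V_0}+h(\varepsilon)}$ (this is where boundedness of $M$ and $\cat M>1$ enter, via the topology of $\Phi_\varepsilon(M)$), producing an additional critical level by the standard deformation lemma. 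The technical heart of this final step is again the $(PS)$ condition above $m_{V_0}$ but strictly below $m_{V_\infty}$, together with a careful control of the mountain-pass type deformations at levels slightly above $m_{V_0}+h(\varepsilon)$.
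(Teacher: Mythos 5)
Your proposal follows essentially the same route as the paper: reduction of the Poisson equation via the Riesz kernel, rescaling, the Nehari manifold with mountain-pass characterization, Palais--Smale compactness below $m^{\infty}_{V_{\infty}}$ (trivially at all levels if $V_{\infty}=\infty$), the truncated-and-translated ground state map $\Phi_{\varepsilon}$, the barycenter map $\beta_{\varepsilon}$ with $\beta_{\varepsilon}\circ\Phi_{\varepsilon}$ homotopic to the inclusion, the category inequality yielding $\cat M$ critical points, and the Benci--Cerami--Passaseo argument (non-contractibility of $\Phi_{\varepsilon}(M)$ in the low sublevel versus contractibility in a higher sublevel, which the paper realizes by projecting a cone over $\Phi_{\varepsilon}(M)$ onto $\mathcal N_{\varepsilon}$) for the extra solution. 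This matches the paper's proof in structure and in all the key lemmas, so the approach is the same.
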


Hereafter, given a topological pair $(X,Y),\cat_{X}(Y)$ is the Ljusternick-Schnirelmann category of $Y$ in $X$,
and, if $X=Y$ this is just denoted with $\cat X$.

\medskip

The proof of Theorem \ref{th:main} is carried out by adapting some ideas of Benci, Cerami  and Passaseo \cite{BC1, BCP} and using the Ljusternick-Schnirelmann Theory.
We mention that these ideas and techniques have been  extensively used to attack also other type of  problems, and indeed
similar  results are obtained for other  equations and operators, like
the Schr\"odinger operator  \cite{CL1,CL2},
 the $p-$laplacian \cite{CS,Alvesgio1}, the biharmonic operator 
\cite{AFMMAS},
$p\&q-$laplacian, fractional laplacian \cite{gaetanogiovany, GMG}, magnetic laplacian \cite{AFMILAN, AFMM}
or quasilinear operators \cite{AFig,AFS, AUG}.

\medskip The plan of the paper is the following. In Section \ref{sec:VS} we recall some basic facts,
we present some preliminaries and the variational setting for the problem. Section \ref{compattezza}
is devoted to prove some compactness properties; as a byproduct we prove the existence
of a ground state solution for our problem, that is a solution having minimal energy. In Section \ref{sec:bary} we introduce the barycenter map,
we show some of its properties and prove, by means of the Ljusternick-Schnirelamnn Theory, Theorem \ref{th:main}.

%

\bigskip

\noindent{\bf Notations.} In the paper we will denote with $|\cdot |_{p}$ the usual $L^{p}$ norm
in $\mathbb R^{N}$; we denote with $B_{r}(x)$
the closed ball in $\mathbb R^{N}$ centered in $x$ with radius $r>0$, with $B^{c}_{r}(x)$ its complementary;
if $x=0$ we simply write $B_{r}$; moreover the letters $C,C_{1}, C_{2},\ldots$
will denote  generic positive constants
(whose value may change from line to line). 
Other notations will be introduced whenever we need.

\section{Preliminaries}\label{sec:VS}
\subsection{Some well known facts}
Before to introduce the variational setting of our problem, we 
recall some basic facts concerning the fractional Sobolev spaces and their embeddings.

Given $\beta\in (0,1)$, the fractional Laplacian $(-\Delta)^\beta$ is the pseudodifferential operator  which can be
 defined via the Fourier transform
$$\mathcal F((-\Delta)^{\beta}u)=|\cdot|^{2\beta}\mathcal Fu,$$
or, if $u$ has sufficient regularity,  by
\[
(-\Delta)^\beta u(z)=-\frac{C_{N,\beta}}{2}\int_{\mathbb R^{N}} \frac{u(z+y)-u(z-y)-2u(z)}{|y|^{N+2\beta}}dy, \quad z\in\mathbb R^N,
\]
where $C_{N,\beta}$ is a suitable normalization constant.

For  $s \in (0,1)$ let
\[
H^s(\mathbb R^N)=\left\{u\in L^2(\mathbb R^N):\,
(-\Delta)^{s/2}u \in L^2(\mathbb R^N)\right\}
\]
be  the Hilbert space with scalar product and (squared) norm given by
\[
(u,v)= \int_{\mathbb R^{N}} (-\Delta)^{s/2}u (-\Delta)^{s/2}v + \int_{\mathbb R^{N}} uv,
\qquad
\|u\|^2=|(-\Delta)^{s/2}u |_2^2+ |u|_2^2.
\]
It is known that $H^{s}(\mathbb  R^{N})  \hookrightarrow L^{p}(\mathbb R^{N}), p\in [2, 2^{*}_{s}]$ with $2^{*}
_{s}:=2N/(N-2s)$. Moreover the embedding of $H^{s}(\Omega)$ is compact if $\Omega\subset \mathbb R^{N}$
is bounded and $p\neq 2^{*}_{s}.$

We will   consider also the homogeneous Sobolev spaces 
$\dot{H}^{\alpha/2}(\mathbb R^{N})$ defined as the completion of $C^{\infty}_{c}(\mathbb R^{N})$ with respect to the norm
$|(-\Delta)^{\alpha/4} u|_{2}$. 
This is a Hilbert space with scalar product and (squared) norm
\[
(u,v)_{\dot{H}^{\alpha/2}}= \int_{\mathbb R^{N}} (-\Delta)^{\alpha/4}u (-\Delta)^{\alpha/4}v,
\qquad
\|u\|_{\dot{H}^{\alpha/2}}^2=|(-\Delta)^{\alpha/4}u |_2^2.
\]
It is well known that 
$\dot{H}^{\alpha /2}( \mathbb R^N ) \hookrightarrow L^{2^{*}_{\alpha /2}}( \mathbb R^N ), 2^{*}_{\alpha}=2N/(N-\alpha)$.
For more general facts about the fractional Laplacian we refer the reader to the beautiful paper \cite{DPV}.

\bigskip

We recall here another fact  that will be frequently used:
\begin{equation}\label{terminof}
\forall \varepsilon>0 \ \exists M_{\xi}>0\ :\quad  \int_{\mathbb R^N} f( u)u \leq \xi \int_{\mathbb R^N}u^2+M_\xi \int_{\mathbb R^N}\abs{u}^{q_{0}+1}, \quad \forall u\in H^{s}(\mathbb R^{N}).
\end{equation}
 This simply follows by \eqref{f_{2}} and \eqref{f_{3}}.

\bigskip

\subsection{The variational setting}

It is easily seen that,  just performing the change of variables 
$w(x):=u(x/\varepsilon ), \psi(x):=\phi(x/\varepsilon)$, problem
 \eqref{prob1} can be rewritten as
%
\begin{equation}\label{prob2}\tag{$P_{\varepsilon}^{*}$}
\left\{
	\begin{array}{l}
		\left(-\Delta \right)^{s}u+V(\varepsilon x)u+\phi (x)u=f(u)\medskip \\ 
		(-\Delta )^{\alpha /2} \phi=\varepsilon^{\alpha -\theta} \gamma_{\alpha}u^{2}, \\ 
	\end{array}
\right.
\end{equation}
to which we will refer from now on.

A usual ``reduction'' argument can be used to deal with a single equation involving just $u$.
Indeed for every $u\in H^{s}(\mathbb R^{N})$ the second equation in \eqref{prob2} is uniquely solved.
Actually, for future reference, we will prove a slightly more general fact.

Let us fix two functions $u,w\in H^{s}(\mathbb R^{N})$ and consider the problem
\begin{equation}\label{prob3}\tag{$Q_{\varepsilon}$}
\left\{
    \begin{array}{l}
        	\left(-\Delta \right)^{\alpha /2} \phi=\varepsilon ^{\alpha-\theta} \gamma_{\alpha} uw, \medskip \\
        	\phi \in \dot{H}^{\alpha /2} ( \mathbb{R}^{N} )
    \end{array}
\right.
\end{equation}
whose weak solution is a function $\tilde \phi\in \dot H^{\alpha/2}(\mathbb R^{N})$ such that
$$\forall v\in \dot H^{\alpha/2}(\mathbb R^{N}): \quad \int_{\mathbb R^{N} } (-\Delta)^{\alpha/4} \tilde \phi (-\Delta)^{\alpha/4} v = \varepsilon^{\alpha-\theta}\gamma_{\alpha}\int_{\mathbb R^{N}} uw v.$$
For every $v\in \dot{H}^{\alpha /2} ( \mathbb{R}^{N})$, by the H\"older inequality and the continuous embeddings, we have
\[
\Big|\int_{\mathbb R^{N}} uwv \Big|\leq   \abs{u}_{\frac{4N}{N+\alpha}}|w|_{\frac{4N}{N+\alpha}} \abs{v}_{2^{*}_{\alpha /2}} 
\leq  C\|u\|\|w\| \norm{v}_{\dot{H}^{\alpha /2}}
\]
deducing that  the map
$$
T_{u,w}:v\in \dot{H}^{\alpha /2}( \mathbb R^N ) \longmapsto  \int_{\mathbb R^{N}} uw v \in \mathbb R
$$
 is linear and continuous: then there exists a unique solution
$ \phi_{\varepsilon,u,w} \in \dot{H}^{\alpha /2}( \mathbb R^N )$  to \eqref{prob3}.
Moreover this  solution has the representation by means of the Riesz kernel $\mathcal K_{\alpha}(x)=\gamma_{\alpha}^{-1}|x|^{\alpha-N}$, hence
$$\phi_{\varepsilon,u,w}
= \varepsilon ^{\alpha-\theta} \frac{1}{|\cdot |^{N-\alpha}}\star (u w).$$

Furthermore
\begin{equation}\label{eq:10}
\norm{\phi_{\varepsilon, u,w}}_{\dot{H}^{\alpha /2}}=\varepsilon^{\alpha-\theta}\|T_{u,w}\|_{\mathcal L (\dot H^{\alpha/2};\mathbb R)}
\leq \varepsilon^{\alpha-\theta}C \norm{u}\|w\|
\end{equation}
and then, for $\zeta, \eta\in H^{s}(\mathbb R^{N})$
\begin{equation}\label{terminoconvolucion}
\int_{\mathbb R^N} \phi_{\varepsilon, u,w}\zeta\eta\leq \abs{\phi_{\varepsilon ,u,w}}_{2^{*}_{\alpha /2}}\abs{\zeta}_{\frac{4N}{N+\alpha}}|\eta|_{\frac{4N}{N+\alpha}}
\leq  \varepsilon^{\alpha -\theta}C_{\textrm e}\norm{u}\|w\|\|\zeta\|\|\eta\|
\end{equation}
where $C_{\textrm e}$ is a suitable embedding constant. Altough its value is not important, we will refer to this constant later on.
%

A particular case of the previous situation is   when $u=w$. In this case we simplify the notation and write 
\begin{itemize}
\item $T_{u}(v):= T_{u,u}(v) = \int_{\mathbb R^{N}} u^{2}v$, and \medskip
\item $\phi_{\varepsilon,u}$ for  the unique solution of the second equation in \eqref{prob2} for fixed $u\in H^{s}(\mathbb R^{N})$. Then
\begin{equation*}\label{eq:questa}
\|\phi_{\varepsilon,u}\|_{\dot{H}^{\alpha/2}}\leq \varepsilon^{\alpha-\theta} C \|u\|^{2}
\end{equation*}
and  the map
$$u \in H^{s}(\mathbb R^{N}) \longmapsto \phi_{\varepsilon, u} \in \dot{H}^{\alpha/2}(\mathbb R^{N})$$
is bounded.
\end{itemize}
Observe also that 
 \begin{equation}\label{eq:convergfacil}
u_{n}^{2}  \to u^{2} \ \text{in }  \ L^{\frac{2N}{N+\alpha}}(\mathbb R^{N})
\Longrightarrow  T_{u_{n}} \longrightarrow T_{u} \  \text{ as operators } \Longrightarrow \phi_{\varepsilon,u_{n}} \longrightarrow \phi_{\varepsilon, u}
\text{ in } \dot{H}^{\alpha/2}(\mathbb R^{N}).
\end{equation}
 
For convenience let us define the map (well defined by \eqref{terminoconvolucion})
\begin{equation*}\label{eq:N}
A: u\in H^{s}(\mathbb R^{N}) \longmapsto  \int_{\mathbb R^N} \phi_{\varepsilon, u}u^2  \in \mathbb R.
\end{equation*}
Then
\begin{equation}\label{eq:esaseusa}
|A(u)| \leq \varepsilon^{\alpha-\theta} C_{\textrm{e}}\|u\|^{4}
\end{equation}
(where $C_{\textrm e}$ is the same constant in \eqref{terminoconvolucion}).
Some relevant properties of $\phi_{\varepsilon, u}$ and $A$ are listed below.
Although these properties are known to be true, we are not able to find them explicitely in the literature; so we 
prefer to give a proof here.

\begin{lemma}\label{propiedades}
The following propositions hold.
\begin{itemize}
\item[$(i)$] For every  $u\in H^{s}(\mathbb R^{N}): \phi_{\varepsilon, u}\geq 0$; \smallskip
\item[$(ii)$] for every  $u\in H^{s}(\mathbb R^{N}), t\in \mathbb R: \phi_{\varepsilon, tu}=t^2 \phi_{\varepsilon, u}$; \smallskip
\item[$(iii)$] if $u_n \rightharpoonup u$ in $H^s( \mathbb R^N )$ then $\phi_{\varepsilon, u_n} \rightharpoonup \phi_{\varepsilon, u}$ in $\dot{H}^{\alpha/2}(\mathbb R^{N})$; \smallskip
\item[$(iv)$] $A$ is of class $C^{2}$ and for every $u,v,w\in H^{s}(\mathbb R^{N})$ 
$$A'(u)[v] = 4 \int_{\mathbb R^N} \phi_{\varepsilon, u}uv,\quad A''(u)[v,w] = 4\int_{\mathbb R^N} \phi_{\varepsilon, u}vw+ 
8\int_{\mathbb R^N} \phi_{\varepsilon, u, w} uv,$$
\smallskip
\item[$(v)$] if $u_n \rightarrow u$ in $L^r ( \mathbb R^N)$, with $2\leq r< 2^*_s$ , then $A( u_n) \rightarrow A( u)$;
\smallskip
\item[$(vi)$] if $u_n \rightharpoonup u$ in $H^s( \mathbb R^N )$ then $A( u_n-u)=A( u_n)-A( u)+o_n( 1)$.
\end{itemize}
\end{lemma}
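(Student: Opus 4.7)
I would split the six items into three clusters: the pointwise/weak structure of $\phi_{\varepsilon,u}$ in (i)--(iii), the differentiability of $A$ in (iv), and the Brezis--Lieb-type analysis in (v)--(vi).

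For (i)--(iii) the lever is the Riesz representation $\phi_{\varepsilon,u}=\varepsilon^{\alpha-\theta}\gamma_{\alpha}^{-1}|\cdot|^{\alpha-N}\star u^{2}$ and uniqueness for \eqref{prob3}. Item (i) is immediate since both convolution factors are nonnegative. For (ii), the function $t^{2}\phi_{\varepsilon,u}$ is a weak solution of the fractional Poisson equation with source $(tu)^{2}=t^{2}u^{2}$, so uniqueness in $\dot H^{\alpha/2}(\mathbb R^{N})$ forces $t^{2}\phi_{\varepsilon,u}=\phi_{\varepsilon,tu}$. For (iii), the bound \eqref{eq:10} makes $\{\phi_{\varepsilon,u_{n}}\}$ bounded in $\dot H^{\alpha/2}(\mathbb R^{N})$; I would extract a weakly convergent subsequence $\phi_{\varepsilon,u_{n}}\rightharpoonup\tilde\phi$, test the weak formulation against $v\in C^{\infty}_{c}(\mathbb R^{N})$, and pass to the limit in $\int u_{n}^{2}v$ via the local compact embedding $H^{s}\hookrightarrow L^{2}_{\mathrm{loc}}$. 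Uniqueness then forces $\tilde\phi=\phi_{\varepsilon,u}$, and applying the same reasoning to every subsequence yields full weak convergence.

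For (iv) the workhorse is the bilinear expansion $\phi_{\varepsilon,u+tv}=\phi_{\varepsilon,u}+2t\phi_{\varepsilon,u,v}+t^{2}\phi_{\varepsilon,v}$, a direct consequence of the linearity of $(-\Delta)^{\alpha/2}$ and uniqueness for \eqref{prob3}. Substituting into $A(u+tv)$ and reading off the coefficient of $t$ gives $A'(u)[v]=2\int\phi_{\varepsilon,u}uv+2\int u^{2}\phi_{\varepsilon,u,v}$. The symmetry of the Dirichlet form $(\phi,\psi)\mapsto\int(-\Delta)^{\alpha/4}\phi\,(-\Delta)^{\alpha/4}\psi$, applied to $\phi_{\varepsilon,u}$ and $\phi_{\varepsilon,u,v}$, yields the identity $\int u^{2}\phi_{\varepsilon,u,v}=\int uv\,\phi_{\varepsilon,u}$, collapsing $A'(u)[v]$ to the stated $4\int\phi_{\varepsilon,u}uv$. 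Differentiating $A'(u+rw)[v]$ in $r$ at $r=0$ via the same bilinear expansion produces the formula for $A''(u)[v,w]$, and the multilinear estimate \eqref{terminoconvolucion} makes both derivatives jointly continuous, giving $A\in C^{2}$.

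For (v)--(vi) the workhorse is the Hardy--Littlewood--Sobolev bound $|A(u)|\leq C\,|u|^{4}_{4N/(N+\alpha)}$; under (H1) a direct check shows $4N/(N+\alpha)\in(2,2^{*}_{s})$, so strong $L^{r}$-convergence in the relevant range controls each slot of the underlying quadrilinear form, whence (v). For (vi), set $w_{n}=u_{n}-u\rightharpoonup 0$ in $H^{s}$ and expand using $\phi_{\varepsilon,u+w_{n}}=\phi_{\varepsilon,u}+2\phi_{\varepsilon,u,w_{n}}+\phi_{\varepsilon,w_{n}}$ and $(u+w_{n})^{2}=u^{2}+2uw_{n}+w_{n}^{2}$. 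Of the nine resulting terms, two reconstruct $A(u)+A(w_{n})$; the remaining seven cross terms must vanish. Those carrying exactly one copy of $w_{n}$ reduce to $w_{n}$ tested against the fixed element $\phi_{\varepsilon,u}u$, which lies in $L^{2}(\mathbb R^{N})$ thanks to $\phi_{\varepsilon,u}\in L^{2N/(N-\alpha)}$, $u\in L^{2N/\alpha}$ and the inequality $N-2s<\alpha$ from (H1); they then vanish by weak $L^{2}$-convergence of $w_{n}$. The terms with two copies of $w_{n}$ either pair $\phi_{\varepsilon,w_{n}}\rightharpoonup 0$ (by (iii)) against a fixed element of $L^{2N/(N+\alpha)}$, or are controlled by splitting $\mathbb R^{N}=B_{R}\cup B_{R}^{c}$ and combining $L^{p}_{\mathrm{loc}}$-compactness of $w_{n}$ with $L^{2N/(N-\alpha)}$-tails of $\phi_{\varepsilon,u}$, closing the $R\to\infty$-then-$n\to\infty$ limit. \textbf{The main obstacle} is isolating the diagonal $A(w_{n})$ from the genuinely moving cross terms $\int\phi_{\varepsilon,w_{n}}uw_{n}$ and $\int\phi_{\varepsilon,u,w_{n}}uw_{n}$, where pure weak convergence is insufficient; here I would extract an a.e.-convergent subsequence from weak $H^{s}$-convergence and invoke a Brezis--Lieb-type peeling for the underlying Riesz double integral, supported by the uniform HLS bound \eqref{eq:esaseusa}.
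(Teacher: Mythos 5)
Your proposal is correct and follows essentially the same route as the paper: (i)--(iii) via the Riesz representation, uniqueness and local compactness; (iv) via the bilinear expansion of $u\mapsto\phi_{\varepsilon,u}$ plus the symmetry of the Dirichlet form (a computation the paper only cites from the literature); (v) via control in $L^{4N/(N+\alpha)}$, which hypothesis \eqref{H} places in $(2,2^{*}_{s})$; and (vi) via expanding the quadrilinear Riesz form and killing the cross terms. Your expansion in powers of $w_{n}=u_{n}-u$ is just a regrouping of the paper's terms $\sigma^{i}_{n}$, and the tools you name for the genuinely hard cross terms --- a.e.\ convergence of the Riesz potentials obtained by a near/far splitting of the kernel, uniform $L^{2}$ bounds upgrading this to weak $L^{2}$ convergence, then pairing against a fixed $L^{2}$ function --- are exactly the ones the paper deploys for $\sigma^{2}_{n},\sigma^{3}_{n},\sigma^{4}_{n}$.
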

\begin{proof}
Items $(i)$ and $(ii)$ follow directly by the definition of $\phi_{\varepsilon, u}$.

\medskip

To prove $(iii)$, let $v\in C^{\infty}_{c}(\mathbb R^{N})$; we have
\begin{eqnarray*}
\int_{\mathbb R^{N}}(-\Delta)^{\alpha/4}(\phi_{\varepsilon,u_{n}} - \phi_{\varepsilon, u})(-\Delta)^{\alpha/4}v
&=&\int_{\mathbb R^{N}} (u_{n}^{2} - u^{2})v \\
&\leq& |v|_{\infty} \Big(\int_{\textrm{supp}\, v}(u_{n}-u)^{2}\Big)^{1/2} \Big(\int_{\textrm{supp}\, v}(u_{n}+u)^{2}\Big)^{1/2} \\
&\to & 0.
\end{eqnarray*}
The conclusion then follows by density.

\medskip

The proof of  $(iv)$ is 
straightforward: 
we refer the reader  to \cite{fangzhang}.

\medskip

To show $(v)$, recall that $2<\frac{4N}{N+\alpha}< 2^*_s$. Since by assumption $\abs{u_n^{2}}_\frac{2N}{N+\alpha} \rightarrow \abs{u^2}_\frac{2N}{N+\alpha}$ and $u_n^2 \rightarrow u^2$ a.e. in $\mathbb R^N$, using the Brezis-Lieb Lemma, $u_n^2 \rightarrow u^2$ in $L^\frac{2N}{N+\alpha}\left( \mathbb R^N\right)$. But then
 using \eqref{eq:convergfacil} we get $\phi_{\varepsilon,u_{n}} \to \phi_{\varepsilon, u}$ in 
$L^{2^*_{\alpha/2}}( \mathbb R^N)$. Consequently
\[
\begin{split}
\abs{A\left( u_n\right) -A\left( u\right)} & \leq \int_{\mathbb R^{N}} \abs{\phi_{\varepsilon, u_n}u_n^2-\phi_{\varepsilon, u}u^2} \leq \int_{\mathbb R^{N}} \abs{\left( \phi_{\varepsilon, u_n}-\phi_{\varepsilon, u} \right)u_n^2} +\int_{\mathbb R^{N}} \abs{\phi_{\varepsilon, u} \left( u_n^2-u^2\right)}\\
& \leq \abs{\phi_{\varepsilon, u_n}-\phi_{\varepsilon, u}}_{2^*_{\alpha/2}}\abs{u_n^2}_\frac{2N}{N+\alpha}+\abs{\phi_{\varepsilon, u}}_{2^*_{\alpha/2}}\abs{u_n^2-u^2}_\frac{2N}{N+\alpha}
\end{split}
\]
from which we conclude.

\medskip

To prove $(vi)$, for the sake of simplicity we drop
the factor $\varepsilon^{\alpha-\theta}$ in the expression of $\phi_{\varepsilon,u,v}$. Defining
\[
\sigma := \int_{\mathbb R^{N}} \int_{\mathbb R^{N}} \frac{u^2( y)u^2( x)}{\abs{x-y}^{N-\alpha}}dydx\,,
\]
\begin{eqnarray*}
\sigma^1_n := \int_{\mathbb R^{N}} \int_{\mathbb R^{N}} \frac{u_n^2 (y)u^2( x)}{\abs{x-y}^{N-\alpha}}dydx\,, \qquad
 \sigma^2_n := \int_{\mathbb R^{N}} \int_{\mathbb R^{N}} \frac{u_n( y)u( y)u_n( x)u( x)}{\abs{x-y}^{N-\alpha}}dydx\\
\sigma^3_n := \int_{\mathbb R^{N}} \int_{\mathbb R^{N}} \frac{u_n^2( y)u_n( x)u( x)}{\abs{x-y}^{N-\alpha}}dydx  \,, \qquad
\sigma^4_n := \int_{\mathbb R^{N}} \int_{\mathbb R^{N}} \frac{u_n( y)u( y)u^2\left( x\right)}{\abs{x-y}^{N-\alpha}}dydx,
\end{eqnarray*}
it is easy to check that
\[
A( u_n-u) -A( u_n) +A( u) = 2\sigma+ 2\sigma^1_n +4\sigma^2_n-4\sigma^3_n-4\sigma^4_n.
\]
Now we claim that, whenever $u_{n}\rightharpoonup u$ in $H^{s}(\mathbb R^{N})$,
\[
\lim _{n\rightarrow \infty} \sigma^i_n = \sigma, \quad i=1,2,3,4
\]
which readily  gives the conclusion.

We prove here only the cases $i=1,2$ since the proof of the other cases is very similar.
Recall that
$$\phi_{\varepsilon, u} (x)=  \int_{\mathbb R^N}\frac{u^2\left( y\right)}{\abs{x-y}^{N-\alpha}}dy\,, \qquad 
\phi_{\varepsilon, u_{n}} (x)=  \int_{\mathbb R^N}\frac{u_{n}^2( y)}{\abs{x-y}^{N-\alpha}}dy.$$
Since $u^2\in L^{\frac{2N}{N+\alpha}}( \mathbb R^N)=L^{(2^{*}_{\alpha/2})'}(\mathbb R^{N})$   and by item $(iii)$ it holds 
$\phi_{\varepsilon,u_{n}} \to \phi_{\varepsilon,u}$ in $L^{2^{*}_{\alpha/2}}(\mathbb R^{N})$, we conclude that
\[
\sigma_{n}^{1}=\int_{\mathbb R^N} \phi_{\varepsilon, u_{n}}u^2 \rightarrow \int_{\mathbb R^N} \phi_{\varepsilon,u} u^2=\sigma
\]
and the claim is true for $i=1$.

For $i=2$ recall that
\[
 \phi_{\varepsilon, u_{n },u}(x) = \int_{\mathbb R^N}\frac{u_n\left( y\right)u\left( y\right)}{\abs{x-y}^{N-\alpha}}dy.
\]
First we show that $\phi_{\varepsilon, u_{n },u} \rightarrow \phi_{\varepsilon, u}$ a.e. in $\mathbb R^N$.
 Given $\xi >0$ and  choosing $R>1/\xi, \frac{N}{2s}<p<\frac{N}{N-\alpha}$ and $\frac{N}{N-\alpha}<q$
 (so that $2p', 2q'\in (2, 2^{*}_{s})$), we have, for large $n$:
\begin{eqnarray*}
\abs{\phi_{\varepsilon, u_{n },u}(x)-\phi_{\varepsilon,u}( x)} &\leq& \abs{u_n-u}_{L^{2p'}( B_R( x))}\abs{u}_{L^{2p'}( B_R( x))}\Big( \int_{\abs{y-x}<R}\frac{dy}{\abs{x-y}^{p( N-\alpha)}}\Big)^{1/p}\\
&+& \abs{u_n-u}_{L^{2q'}( B_R^c( x))}\abs{u}_{L^{2q'}( B_R^c( x))}\left( \int_{\abs{y-x}\geq R}\frac{dy}{\abs{x-y}^{q( N-\alpha)}}\right)^{1/q}\\
&\leq&C_{1}\xi+C_{2}\xi^{N-\alpha},
\end{eqnarray*}
concluding the pointwise convergence.
Moreover by the Sobolev embedding and using \eqref{eq:10},
\[
\abs{\phi_{\varepsilon,u_{n}, u} u_n}_2\leq \abs{\phi_{\varepsilon,u_{n}, u}}_{2^*_{\alpha/2}} \abs{u_n}_{2N/\alpha}\leq C_{1}\norm{u_n}^2 \norm{u}\leq C_{2}
\]
and therefore, up to subsequence, $\phi_{\varepsilon,u_{n}, u} u_n \rightharpoonup \phi_{\varepsilon, u}u$ in 
$L^2( \mathbb R^N)$, by \cite[Lemma 4.8]{Kav}. Since $u\in L^2( \mathbb R^N)$
\[
\sigma_{n}^{2} = \int_{\mathbb R^N} \phi_{\varepsilon,u_{n}, u} u_n u \rightarrow \int_{\mathbb R^N} \phi_{\varepsilon, u} u^2
=\sigma
\]
and the claim  is proved for $i=2$.
\end{proof}

\medskip

We introduce now the variational setting for our problem.
Let us define the Hilbert space 
\[
W_\varepsilon := \left\{ u\in H^s( \mathbb R^N):\int_{\mathbb R^N}V( \varepsilon x)u^2<\infty \right\}
\]
endowed with scalar product and (squared) norm given by
\[
(u,v) _\varepsilon:= \int_{\mathbb R^N} (-\Delta )^{s/2}u(-\Delta )^{s/2}v+  \int_{\mathbb R^N}V( \varepsilon x)uv
\]
and
\[
\norm{u}_\varepsilon^2 := \int_{\mathbb R^N} \abs{(-\Delta )^{s/2}u}^2+\int_{\mathbb R^N}V( \varepsilon x)u^2.
\]
Then it  is standard to see that the critical points of the $C^{2}$ functional (see Lemma \ref{propiedades} $(iv)$)
\[
I_\varepsilon ( u):= \frac{1}{2}\int_{\mathbb R^N} \abs{(-\Delta )^{s/2}u}^2+\frac{1}{2}\int_{\mathbb R^N}V( \varepsilon x)u^2+\frac{1}{4}\int_{\mathbb R^N} \phi_{\varepsilon, u}u^2-\int_{\mathbb R^N}F( u),
\]
on $W_{\varepsilon}$ are weak solutions of problem \eqref{prob2}.

By defining
$$
{\cal N}_{\varepsilon}:=\Big\{u\in W_{\varepsilon}\setminus \{0\}: J_{\varepsilon}(u)=0 \Big\},
$$
 where $$J_{\varepsilon}( u):=I'_\varepsilon ( u) [ u]=\norm{u}_\varepsilon^2+\int_{\mathbb R^N} 
 \phi_{\varepsilon, u}u^2-\int_{\mathbb R^N}f( u)u,$$
we have, by standard arguments:
\begin{lemma}
For every $u \in {\cal N}_{\varepsilon}$, $J'_\varepsilon ( u) [ u]<0$ and there are positive constants $h_\varepsilon, k_\varepsilon$ such that $\norm{u}_\varepsilon \geq h_\varepsilon, I_\varepsilon ( u) \geq k_\varepsilon$. Furthermore, ${\cal N}_{\varepsilon}$ is diffeomorphic to the set
\[
\mathcal S_{\varepsilon}:=\left\{ u\in W_\varepsilon: \norm{u}_\varepsilon=1,\ u>0 \text{ a.e.} \right\}.
\]
\end{lemma}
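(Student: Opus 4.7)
The lemma collects the three standard Nehari-manifold properties, and I plan to prove each in turn using the algebraic identities from Lemma \ref{propiedades}(iv). First, for $J_\varepsilon'(u)[u] < 0$ on $\mathcal{N}_\varepsilon$, I would compute
\[
J_\varepsilon'(u)[u] = 2\|u\|_\varepsilon^2 + 4A(u) - \int_{\mathbb R^N} f'(u)u^2 - \int_{\mathbb R^N} f(u)u,
\]
with $A(u):=\int \phi_{\varepsilon,u} u^2$, and substitute the Nehari identity $\|u\|_\varepsilon^2 + A(u) = \int f(u)u$ to rewrite it as $\int f(u)u + 2A(u) - \int f'(u)u^2$. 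Assumption \eqref{f_{5}}, that $f(t)/t^3$ is strictly increasing on $(0,\infty)$, gives pointwise $f'(t)t > 3 f(t)$ for $t>0$; multiplying by $u$ and integrating (and using $f \equiv f' \equiv 0$ on $(-\infty,0]$ from \eqref{f_{1}}--\eqref{f_{2}}) yields $\int f'(u)u^2 > 3\int f(u)u$. Substituting gives $J_\varepsilon'(u)[u] < -2\|u\|_\varepsilon^2 < 0$.

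For the uniform bounds, $A(u) \geq 0$ (Lemma \ref{propiedades}(i)) together with the Nehari identity gives $\|u\|_\varepsilon^2 \leq \int f(u)u$. Applying \eqref{terminof} with $\xi = V_0/2$, combined with $|u|_2^2 \leq V_0^{-1}\|u\|_\varepsilon^2$ and the continuous embedding $W_\varepsilon \hookrightarrow L^{q_0+1}(\mathbb R^N)$ (since $q_0+1 \in (3, 2_s^*)$), produces $\tfrac{1}{2}\|u\|_\varepsilon^2 \leq C\|u\|_\varepsilon^{q_0+1}$, and hence $\|u\|_\varepsilon \geq h_\varepsilon > 0$. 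For the energy bound, \eqref{f_{4}} yields $\int F(u) \leq K^{-1}\int f(u)u = K^{-1}(\|u\|_\varepsilon^2 + A(u))$, so
\[
I_\varepsilon(u) \geq \left(\tfrac{1}{2} - \tfrac{1}{K}\right)\|u\|_\varepsilon^2 + \left(\tfrac{1}{4} - \tfrac{1}{K}\right) A(u) \geq \left(\tfrac{1}{2} - \tfrac{1}{K}\right) h_\varepsilon^2 =: k_\varepsilon > 0,
\]
both coefficients being positive because $K > 4$.

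For the diffeomorphism, given $u \in \mathcal{S}_\varepsilon$ I look for the unique $t(u) > 0$ with $t(u)u \in \mathcal{N}_\varepsilon$. The equation $J_\varepsilon(tu) = 0$, after dividing by $t^4$, becomes
\[
\frac{1}{t^2} + A(u) = \int_{\mathbb R^N} \frac{f(tu)}{(tu)^3}\, u^4,
\]
where $u > 0$ a.e.\ legitimizes the division by $(tu)^3$. Assumption \eqref{f_{5}} makes the right-hand side strictly increasing in $t$, while the left-hand side is strictly decreasing; the growth information from \eqref{f_{2}}--\eqref{f_{3}} and the standard consequence $F(t) \geq C t^K$ of \eqref{f_{4}} (for large $t$, with $K>4$) guarantee that the left-hand side dominates as $t\to 0^+$ and the right-hand side dominates as $t\to +\infty$, producing a unique $t(u)$. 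The smoothness of $u \mapsto t(u)$ follows from the implicit function theorem applied to $(t,u) \mapsto J_\varepsilon(tu)$, whose $t$-derivative at $t(u)$ equals $t(u)^{-1} J_\varepsilon'(t(u)u)[t(u)u] \neq 0$ by the first step. The map $m_\varepsilon(u) := t(u)u$ is then a $C^2$ diffeomorphism from $\mathcal{S}_\varepsilon$ onto $\mathcal{N}_\varepsilon$, with smooth inverse $v \mapsto v/\|v\|_\varepsilon$. The point that I expect to be the main technical nuisance is verifying that every $v \in \mathcal{N}_\varepsilon$ is actually positive a.e., so that this inverse genuinely lands in $\mathcal{S}_\varepsilon$; I would handle it by exploiting the fractional Stroock--Varopoulos inequality $\int (-\Delta)^{s/2}v \cdot (-\Delta)^{s/2}(-v_-) \geq \int |(-\Delta)^{s/2} v_-|^2$ to control the negative part after testing the Nehari identity and using $f \equiv 0$ on $(-\infty,0]$.
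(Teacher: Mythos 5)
Your computations for the first three assertions are correct and are the standard ones (note that the paper itself omits the proof of this lemma, appealing to ``standard arguments''). Two small remarks there: \eqref{f_{5}} only yields $f'(t)t\geq 3f(t)$ pointwise, since a strictly increasing $C^1$ function may have vanishing derivative at some points, so the conclusion of your first computation should read $J'_\varepsilon(u)[u]\leq -2\norm{u}_\varepsilon^2<0$, which is all that is needed; and since $I_\varepsilon$ is only known to be $C^2$, the implicit function theorem gives $u\mapsto t(u)$ of class $C^1$, hence a $C^1$ (not $C^2$) diffeomorphism.

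The genuine gap is exactly at the point you flagged, and your proposed repair does not work. With the definition $\mathcal N_\varepsilon=\{u\in W_\varepsilon\setminus\{0\}:J_\varepsilon(u)=0\}$, it is \emph{not} true that every element of $\mathcal N_\varepsilon$ is positive a.e.: take $w\geq 0$, $w\not\equiv 0$, and $\psi\leq 0$, $\psi\not\equiv 0$, with disjoint supports, and set $u=w+\psi$; since $f$ vanishes on $(-\infty,0]$ one has $\int f(tu)tu=\int f(tw)tw$, and the same monotonicity argument you use for existence and uniqueness of $t(u)$ produces a unique $t^*>0$ with $t^*u\in\mathcal N_\varepsilon$ — a sign-changing element of the Nehari manifold. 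The Stroock--Varopoulos device cannot exclude this, because testing against the negative part requires the full Euler--Lagrange equation $I'_\varepsilon(v)=0$, whereas membership in $\mathcal N_\varepsilon$ encodes only the single scalar identity $I'_\varepsilon(v)[v]=0$; positivity of Nehari elements is false, not merely hard to prove. Consequently your map $u\mapsto t(u)u$ is a well-defined, injective $C^1$ map from $\mathcal S_\varepsilon$ into $\mathcal N_\varepsilon$, but it is not onto. This traces back to an imprecision in the statement itself: what your argument actually proves (and what is the standard formulation) is that $u\mapsto t(u)u$ is a diffeomorphism from $\{u\in W_\varepsilon:\norm{u}_\varepsilon=1,\ u^+\not\equiv 0\}$ onto $\mathcal N_\varepsilon$, or equivalently from $\mathcal S_\varepsilon$ onto the subset of a.e.\ positive elements of $\mathcal N_\varepsilon$. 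Either version suffices for the rest of the paper, where only positive critical points are used; you should state explicitly which one you are proving rather than attempting to show that all of $\mathcal N_\varepsilon$ consists of positive functions.
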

$\mathcal N_{\varepsilon}$ is the {\sl Nehari manifold} associated to $I_{\varepsilon}.$
By the assumptions on $f$,  the functional $I_{\varepsilon}$ has the Mountain Pass geometry. This is standard but
we give the easy proof for completeness. \smallskip
\begin{itemize}
\item[(MP1)] $I_{\varepsilon}(0)=0;$ \smallskip
\item[(MP2)] since, for every $\xi>0$ there exists $M_{\xi}>0$ such that $F(u)\leq \xi u^{2} + M_{\xi}|u|^{q_{0}+1}$, we have
\begin{eqnarray*}
I_{\varepsilon}(u) &\geq& \frac{1}{2}\|u\|^{2}_{\varepsilon} -\int_{\mathbb R^{N}} F(u) \\
&\geq& \frac{1}{2}\|u\|^{2}_{\varepsilon} -\xi C_{1} \|u\|_{\varepsilon}^{2}-M_{\xi}C_{2} \|u\|_{\varepsilon}^{q_{0}+1}
\end{eqnarray*}
and we conclude $I_\varepsilon$ has a strict local minimum at $u=0$; \smallskip

\item[(MP3)] finally, since  \eqref{f_{4}} implies $F( t)\geq Ct^K$ for $t>0$, with $K>4$ (and less then $q_{0}+1$),
fixed  ${v}\in C^{\infty}_{c}(\mathbb R^{N}), v>0$ 
we have
\begin{align*}
I_\varepsilon ( t v)&= \frac{t^2}{2}\norm{v}^2_\varepsilon+\frac{t^{4}}{4}\int_{\mathbb R^N} \phi_{\varepsilon, v}v^2 -\int_{\mathbb R^N}F( tv)\\
& \leq \frac{t^2}{2}\norm{v}^2_\varepsilon+\frac{t^4}{4}\int_{\mathbb R^N} \phi_{\varepsilon, v}v^2 -Ct^K\int_{\mathbb R^N}v^K 
\end{align*}
concluding that the functional is negative for suitable large $t$.
\end{itemize}

Then denoting with
$$
c_{\varepsilon}:=\inf_{\gamma\in \mathcal H_{\varepsilon}}\ \sup_{t\in[0,1]} I_{\varepsilon}(\gamma(t)),
\qquad \mathcal H_{\varepsilon}=\Big\{\gamma\in C([0,1], W_{\varepsilon}): \gamma(0)=0, I_{\varepsilon}(\gamma(1))<0\Big\}
$$
the Mountain Pass level, and with 
$$m_{\varepsilon}:= \inf_{u\in {\cal N}_{\varepsilon}}I_{\varepsilon}(u)$$
the {\sl ground state level},
it holds,   in a standard way, that
\begin{equation}\label{eq:defce}
 c_{\varepsilon}= m_{\varepsilon}=
 \inf_{u\in W_{\varepsilon}\setminus \{0\}}\sup_{t\geq0}I_{\varepsilon}(tu).
\end{equation}

\medskip

\medskip

It is known that for ``perturbed'' problems a major role is played by the problem at infinity that we now introduce.

\subsection{The problem at ``infinity''}
Let us consider the ``limit'' problem  (the autonomous problem)
associated to \eqref{prob2}, that is
\begin{equation}\label{prob4}\tag{$A_{\mu}$}
\left\{
	\begin{array}{l}
		(-\Delta )^{s}u+\mu u=f(u) \smallskip\\
        u\in H^s( \mathbb R^N)\\
	\end{array}
\right.
\end{equation}
where $\mu>0$ is a constant. The solutions are critical points of the functional
$$
E_\mu (u)= \frac{1}{2}\int_{\mathbb R^N} \abs{(-\Delta )^{s/2}u}^2+\frac{\mu}{2}\int_{\mathbb R^N}u^2-\int_{\mathbb R^N}F( u).
$$
in $ H^s( \mathbb R^N)$.
Denoting with $H^s_{\mu}( \mathbb R^N)$ simply the space $H^{s}(\mathbb R^{N})$ endowed with the (equivalent squared) norm 
\[
\norm{u}_{H^{s}_{\mu}}^2:=|(-\Delta )^{s/2}u|_{2}^2+\mu |u|^2_{2},
\]
by the assumptions of the nonlinearity $f$, it is easy to see that 
the functional $E_{\mu}$ has the Mountain Pass geometry with Mountain Pass level
\begin{equation*}\label{eq:mpinfinito}
 c^{\infty}_{\mu}:=\inf_{\gamma\in \mathcal H_{\mu}}\ \sup_{t\in[0,1]} E_{\mu}(\gamma(t)),
\qquad \mathcal H_{\mu}:=\Big\{\gamma\in C([0,1], H^{s}_{\mu}(\mathbb R^{N})): \gamma(0)=0, E_{\mu}(\gamma(1))<0\Big\}.
\end{equation*}
Introducing the set 
\[
{\cal M}_{\mu}:=\Big\{u\in H^{s}(\mathbb R^N)\setminus\{0\}: \|u\|^{2}_{H^{s}_{\mu}}
=\int_{\mathbb R^N}f(u)u \Big\}
\]
it is standard to see that 
\begin{itemize}
\item ${\cal M}_{\mu}$ has a structure of differentiable manifold (said  the {\sl Nehari manifold} associated to $E_{\mu}$),
\item ${\cal M}_{\mu}$ is bounded away from zero and radially homeomorfic to the unit sphere,
\item  the mountain pass value $c^{\infty}_{\mu}$ coincide with the 
{\sl ground state level}
\[
m^{\infty}_{\mu}:=
\inf_{u\in \mathcal M_{\mu}}E_{\mu}(u)>0.
\]
\end{itemize}
The symbol ``$\infty$'' in the notations  is just to recall we are dealing with the limit problem. In the sequel
we will mainly deal with $\mu=V_{0}$ and $\mu=V_{\infty}$ (whenever this last one is finite).
Of course the inequality 
$$m_{\varepsilon}\geq m^{\infty}_{V_{0}}$$
holds.

\medskip

\section{Compactness properties for $I_{\varepsilon}, E_{\mu}:$ \\ existence of a ground state solution}\label{compattezza}
We begin by showing the boundedness of the Palais-Smale sequences for $ E_\mu$ in $H^{s}_{\mu}(\mathbb R^{N})$
and $I_\varepsilon$ in $W_{\varepsilon}.$
 Let $\{ u_n\} \subset H^{s}_{\mu}(\mathbb R^{N})$ be a Palais-Smale sequence for $E_\mu$, that is, $\abs{E_\mu (u_n) } \leq C$ and $E_\mu' ( u_n) \rightarrow 0$. Then, for large $n$,
\begin{align*}
C+\norm{u_n}_{H^{s}_{\mu}} > E_\mu ( u_n)-\frac{1}{K}E_\mu' ( u_n) [ u_n] &= \left( \frac{1}{2}-\frac{1}{K}\right) \norm{u_n}^2_{H^{s}_\mu}+\frac{1}{K}\int_{\mathbb R^N} ( f( u_n)u_n-KF( u_n) )\\
&\geq \left( \frac{1}{2}-\frac{1}{K}\right) \norm{u_n}^2_{H^{s}_{\mu}},
\end{align*}
and thus $\{u_n\}$ is bounded. Similarly we conclude for $I_\varepsilon$, using that
\begin{equation*}
\begin{split}
I_\varepsilon ( u_n)-\frac{1}{K}I'_\varepsilon ( u_n) [ u_n] & =\left( \frac{1}{2}-\frac{1}{K}\right) \norm{u_n}^2_\varepsilon+
\left( \frac{1}{4}-\frac{1}{K}\right) \int_{\mathbb R^N}\phi_{\varepsilon, u_n}u_n^2  +\frac{1}{K}\int_{\mathbb R^N} ( f( u_n)u_n-
KF( u_n) ) \\
&\geq \left( \frac{1}{2}-\frac{1}{K}\right) \norm{u_n}^2_\varepsilon.
\end{split}
\end{equation*}

In order to prove compactness, some  preliminary work is needed.
Let us recall the following Lions type lemma, whose proof can be found in
\cite[Lemma 2.3]{GPM}.
\begin{lemma}\label{lionslemma}
If $\{u_{n}\}$ is bounded in $H^{s}(\mathbb R^{N})$ and for some $R>0$ and $2\leq r< 2_{s}^{*}$ we have $$\sup_{x\in \mathbb R^{N}}\int_{B_R(x)}|u_{n}|^{r}\to 0\quad  \text{ as }\quad  n\to \infty,$$ then $u_{n}\to 0$ in $L^{p}(\mathbb R^{N})$ for $2< p <2_{s}^{*}$.
\end{lemma}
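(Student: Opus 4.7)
The plan is to adapt the classical Lions vanishing lemma to the fractional Sobolev framework, which is precisely what \cite[Lemma 2.3]{GPM} does. Since H\"older's inequality on a bounded ball yields $\|u_n\|_{L^2(B_R(x))}^2\leq |B_R|^{1-2/r}\|u_n\|_{L^r(B_R(x))}^2$, the hypothesis implies $\sup_x\|u_n\|_{L^2(B_R(x))}\to 0$, and I may assume $r=2$. Fix a countable family $\{B_R(x_i)\}_{i\in\mathbb N}$ covering $\mathbb R^N$ so that every point belongs to at most $\kappa=\kappa(N)$ of these balls. Two ingredients that are uniform in $i$ by translation invariance are then available: the local fractional Sobolev embedding $\|v\|_{L^{2_s^*}(B_R(x_i))}\leq C\|v\|_{H^s(B_R(x_i))}$, and, by restricting the Gagliardo double integral to subsets, the bounded-overlap estimate $\sum_i \|v\|_{H^s(B_R(x_i))}^2\leq \kappa\|v\|_{H^s(\mathbb R^N)}^2$.

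The heart of the argument is to establish convergence in $L^{p_0}(\mathbb R^N)$ for the distinguished exponent $p_0:=4-4/2_s^*\in(2,2_s^*)$. On each ball, the interpolation (log-convexity of $L^p$-norms)
\[
\|u_n\|_{L^{p_0}(B_R(x_i))}^{p_0}\leq\|u_n\|_{L^2(B_R(x_i))}^{\theta p_0}\,\|u_n\|_{L^{2_s^*}(B_R(x_i))}^{(1-\theta)p_0}
\]
holds with $\theta=(p_0-2)/p_0$, which has been chosen precisely so that $(1-\theta)p_0=2$. Applying the local Sobolev embedding, extracting the supremum of the $L^2$-factor, and summing over $i$ gives
\[
\|u_n\|_{L^{p_0}(\mathbb R^N)}^{p_0}\leq C^{2}\Bigl(\sup_{x\in\mathbb R^N}\|u_n\|_{L^2(B_R(x))}\Bigr)^{p_0-2}\sum_i\|u_n\|_{H^s(B_R(x_i))}^2,
\]
where the last sum is uniformly bounded by $\kappa\|u_n\|_{H^s(\mathbb R^N)}^2$ and the supremum tends to $0$ by hypothesis. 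Hence $u_n\to 0$ in $L^{p_0}(\mathbb R^N)$. For an arbitrary $p\in(2,2_s^*)$, a global interpolation $\|u_n\|_{L^p}\leq \|u_n\|_{L^a}^{1-\alpha}\|u_n\|_{L^b}^{\alpha}$ with $a,b\in\{2,p_0,2_s^*\}$ chosen so that $p$ lies between $a$ and $b$, combined with the uniform $L^2$- and $L^{2_s^*}$-bounds from the Sobolev embedding $H^s\hookrightarrow L^{2}\cap L^{2_s^*}$, concludes the proof.

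The main obstacle I expect is the exponent bookkeeping behind the choice of $p_0$: one must tune it so that the surviving power of the $L^{2_s^*}$-factor equals exactly $2$, matching the only sum $\sum_i \|u_n\|_{H^s(B_R(x_i))}^2$ that is controlled by the $H^s$-boundedness together with the bounded-overlap property. A naive H\"older applied to the sum turns out to be sharp enough to give only uniform boundedness but not the required smallness, so this precise matching is essential. Beyond that, all the remaining ingredients---cut-off estimates for the local embedding, log-convexity, and global interpolation---carry over from the classical setting without new difficulty.
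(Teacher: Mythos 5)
Your argument is correct: the choice $p_0=4-4/2_s^*$ makes the interpolation exponents work out exactly ($\theta p_0=p_0-2$, $(1-\theta)p_0=2$, and $1/p_0=\theta/2+(1-\theta)/2_s^*$), the bounded-overlap covering controls $\sum_i\|u_n\|_{H^s(B_R(x_i))}^2$ by $\kappa\|u_n\|_{H^s}^2$ via the restricted Gagliardo seminorms, and the reduction from $r$ to $2$ and the final global interpolation are both sound. The paper does not reprove this lemma but simply cites \cite[Lemma 2.3]{GPM}, and your proof is essentially the standard Lions-type covering-plus-interpolation argument carried out there, so there is nothing further to compare.
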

Then we can prove the following
\begin{lemma}\label{gio5}
Let $\{u_{n}\}\subset W_{\varepsilon}$ be bounded and such that $I_{\varepsilon}'(u_{n})\to 0$.
Then we have either
\begin{itemize}
\item[a)]$u_{n} \rightarrow 0$ in $W_{\varepsilon}$, or \smallskip
\item[b)] there exist a sequence $\{y_{n}\} \subset {\mathbb R^N}$ and constants $R, c > 0$ such that
$$
\liminf_{n\rightarrow +
\infty}\int_{B_{R}(y_{n})}u_{n}^{2} \geq c > 0.
$$
\end{itemize}
\end{lemma}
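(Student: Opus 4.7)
The plan is to argue by contradiction (or rather, by dichotomy): assume alternative $b)$ fails and show that alternative $a)$ must hold. This is a standard non-vanishing argument of Lions' type, adapted to the fractional setting through Lemma \ref{lionslemma} and the growth estimate \eqref{terminof}.

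First I would suppose that $b)$ does not occur, so that for every $R>0$ one has
$$\lim_{n\to\infty}\sup_{y\in\mathbb R^{N}}\int_{B_{R}(y)}u_{n}^{2}=0.$$
Since $\{u_{n}\}$ is bounded in $W_{\varepsilon}$ and hence in $H^{s}(\mathbb R^{N})$ (recall $V(\varepsilon x)\geq V_{0}>0$), Lemma \ref{lionslemma} applies with $r=2$ and yields
$$u_{n}\longrightarrow 0 \quad \text{in } L^{p}(\mathbb R^{N}), \quad \forall p\in(2,2^{*}_{s}).$$
In particular, since by \eqref{f_{3}} the exponent $q_{0}+1$ belongs to $(3,2^{*}_{s})\subset (2,2^{*}_{s})$, we have $|u_{n}|_{q_{0}+1}\to 0$.

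Next I would test $I_{\varepsilon}'(u_{n})$ against $u_{n}$ itself. Since $I_{\varepsilon}'(u_{n})\to 0$ in $W_{\varepsilon}^{\ast}$ and $\{u_{n}\}$ is bounded,
$$I_{\varepsilon}'(u_{n})[u_{n}]=\|u_{n}\|_{\varepsilon}^{2}+\int_{\mathbb R^{N}}\phi_{\varepsilon,u_{n}}u_{n}^{2}-\int_{\mathbb R^{N}}f(u_{n})u_{n}=o_{n}(1).$$
Now I control the nonlinear term via \eqref{terminof}: fixed $\xi>0$, for some $M_{\xi}>0$,
$$\int_{\mathbb R^{N}}f(u_{n})u_{n}\leq \xi|u_{n}|_{2}^{2}+M_{\xi}|u_{n}|_{q_{0}+1}^{q_{0}+1}\leq \frac{\xi}{V_{0}}\|u_{n}\|_{\varepsilon}^{2}+M_{\xi}|u_{n}|_{q_{0}+1}^{q_{0}+1}.$$
Using the boundedness of $\|u_{n}\|_{\varepsilon}$ and the fact that $|u_{n}|_{q_{0}+1}\to 0$, we obtain $\limsup_{n}\int f(u_{n})u_{n}\leq C\xi$ for every $\xi>0$, hence $\int f(u_{n})u_{n}\to 0$.

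Finally, combining the two displays and using Lemma \ref{propiedades}$(i)$ (which gives $\phi_{\varepsilon,u_{n}}\geq 0$, so $\int \phi_{\varepsilon,u_{n}}u_{n}^{2}\geq 0$), we conclude
$$\|u_{n}\|_{\varepsilon}^{2}\leq \|u_{n}\|_{\varepsilon}^{2}+\int_{\mathbb R^{N}}\phi_{\varepsilon,u_{n}}u_{n}^{2}=\int_{\mathbb R^{N}}f(u_{n})u_{n}+o_{n}(1)\longrightarrow 0,$$
which is exactly alternative $a)$. The only mildly delicate point — and the ``obstacle'' such as it is — is the splitting of $\int f(u_{n})u_{n}$: one cannot discard the $|u_{n}|_{2}^{2}$ contribution directly (Lions' lemma gives no decay in $L^{2}$), which is why one has to absorb it into a small multiple of the norm by sending $\xi\to 0$ after taking the limsup in $n$. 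The positivity of the nonlocal term provided by Lemma \ref{propiedades}$(i)$ makes the last step costless; in particular the Poisson part does not interfere with the dichotomy.
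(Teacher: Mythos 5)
Your proof is correct and follows essentially the same route as the paper: assume b) fails, apply the Lions-type Lemma \ref{lionslemma} to get $u_{n}\to 0$ in $L^{p}$ for $p\in(2,2^{*}_{s})$, use \eqref{terminof} (with the $L^{2}$-boundedness and the arbitrariness of $\xi$) to deduce $\int f(u_{n})u_{n}\to 0$, and conclude from $I'_{\varepsilon}(u_{n})[u_{n}]=o_{n}(1)$ together with $\phi_{\varepsilon,u_{n}}\geq 0$. No gaps.
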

\begin{proof}
Suppose that b) does not occur. Using Lemma \ref{lionslemma}  it follows
$$
u_{n} \rightarrow 0 \,\,\, \mbox{in}\,\,\, L^{p}({\mathbb R^N})\,\,\, \mbox{for}\,\,\, p \in (2, 2^{*}_{s}).
$$
Using \eqref{terminof},
the boundedness of $\{u_{n}\}$  in $L^{2}(\mathbb R^{N})$ and  the fact that 
$u_{n} \rightarrow 0$ in $L^{q_{0}+1}(\mathbb R^{N})$, 
we conclude that
$$
\int_{\mathbb R^N}f(u_{n})u_{n}\rightarrow 0.
$$
Finally, since $$\norm{u_n}^2_\varepsilon-\int_{\mathbb R^N}f(u_{n})u_{n}\leq \norm{u_n}^2_\varepsilon +\int_{\mathbb R^N}\phi _{\varepsilon, u_n}u_n^2 -\int_{\mathbb R^N}f(u_{n})u_{n}= I'_{\varepsilon}(u_{n})[u_{n}]=o_{n}(1),$$ it follows that $u_{n} \rightarrow 0$ in $ W_{\varepsilon}$.
\end{proof}

\medskip

In the rest of the paper we assume, without loss of generality, that $0\in M$, that is, $V(0)=V_{0}$.

\begin{lemma}\label{gio9}
Assume that $V_{\infty}<\infty$ and let $\{v_{n}\}\subset W_{\varepsilon}$ be a $(PS)_{d}$ sequence for  $I_{\varepsilon}$ such that $v_{n}\rightharpoonup 0$ in $W_{\varepsilon}$. Then $$v_{n}\not\rightarrow 0\ \  \text{in}\ \ W_{\varepsilon}\ \Longrightarrow \ d \geq
m^{\infty}_{V_{\infty}}.$$
\end{lemma}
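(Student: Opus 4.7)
My approach is a standard translation-plus-Fatou argument adapted to this nonlocal Schr\"odinger--Poisson system, with an auxiliary ``limit problem at infinity'' that retains the nonlocal Poisson term.

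Since $v_n\not\to 0$ in $W_\varepsilon$, Lemma~\ref{gio5} yields $\{y_n\}\subset\mathbb R^N$ and $R,c>0$ with $\liminf_n\int_{B_R(y_n)}v_n^2\geq c$. I first argue $|y_n|\to\infty$: if a subsequence of $\{y_n\}$ stayed bounded, the balls $B_R(y_n)$ would sit inside a fixed ball $B_{R_0}$, and the compact embedding $W_\varepsilon\hookrightarrow L^2(B_{R_0})$ combined with $v_n\rightharpoonup 0$ would force $\int_{B_{R_0}}v_n^2\to 0$, contradicting the concentration bound. Setting $\tilde v_n(x):=v_n(x+y_n)$, the bound $V\geq V_0>0$ yields $\|\tilde v_n\|_{H^s}\leq C\|v_n\|_\varepsilon$ uniformly bounded; along a subsequence $\tilde v_n\rightharpoonup\tilde v$ in $H^s(\mathbb R^N)$, strongly in $L^p_{\mathrm{loc}}$ for $2\leq p<2^*_s$ and a.e., with $\tilde v\neq 0$ thanks to the mass bound.

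The central step is a Fatou lower bound. From \eqref{f_{4}}, Lemma~\ref{propiedades}(i), and $K>4$,
\[
d+o(1)=I_\varepsilon(v_n)-\tfrac1K I'_\varepsilon(v_n)[v_n]=\left(\tfrac12-\tfrac1K\right)\|v_n\|_\varepsilon^2+\left(\tfrac14-\tfrac1K\right)A(v_n)+\tfrac1K\!\int\!(f(v_n)v_n-KF(v_n))
\]
has each integrand nonnegative. The decisive fact for the potential term is that $|y_n|\to\infty$ together with $V_\infty=\liminf_{|x|\to\infty}V(x)$ yield, for every $\delta>0$ and $R>0$, $V(\varepsilon(x+y_n))\geq V_\infty-\delta$ uniformly on $B_R$ for $n$ large. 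Translating by $y_n$ (using translation invariance of the Riesz convolution defining $A$) and applying Fatou termwise --- with Lemma~\ref{propiedades}(vi) and $A\geq 0$ giving $\liminf A(\tilde v_n)\geq A(\tilde v)$ --- I obtain
\[
d\geq\tilde E(\tilde v)-\tfrac1K\tilde E'(\tilde v)[\tilde v],\qquad \tilde E(u):=\tfrac12\|u\|^2_{H^s_{V_\infty}}+\tfrac14 A(u)-\!\int\!F(u).
\]

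To close the argument, I identify $\tilde v$ as a nontrivial critical point of $\tilde E$. Testing $I'_\varepsilon(v_n)$ against $\varphi(\cdot-y_n)$ for $\varphi\in C^\infty_c(\mathbb R^N)$ and passing to the limit --- via weak $H^s$-convergence, the subcritical growth of $f$ with $L^p_{\mathrm{loc}}$-convergence, Lemma~\ref{propiedades}(iii) and H\"older for the nonlocal term, and the above uniform bound on $V(\varepsilon(\cdot+y_n))$ on $\mathrm{supp}\,\varphi$ (after a further subsequence so that $V(\varepsilon y_n)$ converges) --- gives $\tilde E'(\tilde v)[\varphi]=0$, whence $\tilde E'(\tilde v)[\tilde v]=0$ and the previous display becomes $d\geq\tilde E(\tilde v)$. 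Finally, $A\geq 0$ yields $\tilde E\geq E_{V_\infty}$ pointwise, so $\max_{t>0}\tilde E(tu)\geq\max_{t>0}E_{V_\infty}(tu)$ for each $u\neq 0$; the Mountain Pass characterization of both ground-state levels then forces the Nehari minimum of $\tilde E$ to dominate $m^\infty_{V_\infty}$, and consequently $d\geq\tilde E(\tilde v)\geq m^\infty_{V_\infty}$. The main difficulty lies precisely in this last identification: only the one-sided bound $V(\varepsilon(\cdot+y_n))\geq V_\infty-\delta$ is directly available from the liminf assumption, so the passage to the limit in the test-function identity has to be handled by a subsequence extraction exploiting the nonnegativity and sign structure of the separate terms.
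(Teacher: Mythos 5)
Your setup (concentration via Lemma \ref{gio5}, $|y_n|\to\infty$, translation, Fatou on the nonnegative decomposition of $I_\varepsilon(v_n)-\tfrac1K I_\varepsilon'(v_n)[v_n]$) is sound, but the argument has a genuine gap exactly where you flag "the main difficulty": the identification of $\tilde v$ as a critical point of $\tilde E$, whose potential is the \emph{constant} $V_\infty$. Hypothesis \eqref{V} gives only $V_\infty=\liminf_{|x|\to\infty}V$, so the translated potentials $V(\varepsilon(\cdot+y_n))$ satisfy the one-sided bound $\geq V_\infty-\delta$ on compacts for large $n$, but they need not converge to $V_\infty$ in any sense: $V$ is merely continuous (not uniformly continuous, possibly unbounded away from its liminf), so along the concentration points the potential may oscillate or tend to any value $\geq V_\infty$, or have no limit at all. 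Hence passing to the limit in $I_\varepsilon'(v_n)[\varphi(\cdot-y_n)]$ cannot yield $\tilde E'(\tilde v)[\varphi]=0$; at best $\tilde v$ would solve an equation with an unknown potential $W\geq V_\infty$, and no subsequence extraction repairs this under \eqref{V} alone. Nor can you salvage the weaker inequality $\tilde E'(\tilde v)[\tilde v]\leq 0$ (which, combined with a Nehari projection $t_0\tilde v$, $t_0\leq1$, and the monotonicity of $t\mapsto\tfrac14 f(t)t-F(t)$, would close the argument): from $I_\varepsilon'(v_n)[v_n]=o_n(1)$ Fatou gives lower bounds for the liminf of \emph{both} sides, and two lower bounds for the same quantity do not compare $\|\tilde v\|^2_{H^s_{V_\infty}}+A(\tilde v)$ with $\int f(\tilde v)\tilde v$. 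As written, the chain $d\geq\tilde E(\tilde v)\geq m^\infty_{V_\infty}$ is therefore unsupported.

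The paper avoids the limit equation entirely: it projects $v_n$ onto the Nehari manifold of $E_{V_\infty}$ by choosing $t_n$ with $t_nv_n\in\mathcal M_{V_\infty}$, proves $\limsup_n t_n\leq1$ by a contradiction argument using \eqref{f_{5}}, Lemma \ref{gio5} and Fatou, and then compares $I_\varepsilon(v_n)$ with $E_{V_\infty}(t_nv_n)\geq m^\infty_{V_\infty}$ in the two cases $t_n\to1$ and $t_n\to t_0<1$. In that comparison the liminf hypothesis enters only through the one-sided bound $V(\varepsilon x)>V_\infty-\xi$ for $|x|\geq\widetilde R$ together with $v_n\to0$ in $L^2(B_{\widetilde R})$ (a consequence of $v_n\rightharpoonup0$, which your argument uses only to send $|y_n|\to\infty$). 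If you want to keep your translation-based strategy, you would need either to strengthen \eqref{V} (e.g., $V(x)\to V_\infty$ as $|x|\to\infty$, or uniform continuity plus convergence of $V(\varepsilon y_n)$) or to replace the criticality of $\tilde v$ by an energy-comparison device of the paper's type.
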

\begin{proof}
Observe, preliminarly, that by condition \eqref{V} it follows that
\begin{equation}\label{eq:conseqV1}
 \forall \xi>0 \ \exists \widetilde R=\widetilde  R_{\xi}>0 : \ \  V(\varepsilon x)> V_{\infty} - \xi, \qquad \forall x\notin B_{\widetilde  R}.
\end{equation}

Let $\{t_{n}\}\subset (0,+\infty )$ be such that $\{t_{n}v_{n}\}\subset {\cal M}_{V_{\infty}}$. We start by showing the following

\medskip

\noindent \textbf{Claim:} {\it The sequence $\{t_{n}\}$ satisfies $\limsup_{n\rightarrow \infty}t_{n} \leq 1$. }

\medskip

\noindent Supposing by contradiction that the claim does not hold, there exists $\delta> 0$ and a subsequence still denoted by $\{t_{n}\}$, such that
\begin{eqnarray}\label{limsup}
t_{n} \geq 1 + \delta \quad \mbox{for all} \quad n \in \mathbb  N.
\end{eqnarray}
Since $\left\{v_n \right\}$ is a bounded $(PS)_{d}$ sequence for $I_\varepsilon$, $I'_{\varepsilon}(v_{n})[v_{n}]=o_{n}(1)$, that is,
\[
\norm{v_n}^2_\varepsilon +\int_{\mathbb R^n}\phi_{\varepsilon, v_n}v_n^2= \int_{\mathbb R^n} f( v_n)v_n+o_n\left( 1\right).
\]
Moreover, since $\{t_{n}v_{n}\}\subset {\cal M}_{V_{\infty}}$, we get
$$
\norm{t_n v_n}^2_{H^{s}_{V_\infty}}=\int_{\mathbb R^N}f(t_{n}v_{n})t_{n}v_{n}.
$$
These equalities imply that
\[
\int_{\mathbb R^N}\left(\frac{f(t_{n}v_{n})}{t_{n}}- f(v_{n})\right)v_{n}=\int_{\mathbb R^N}(V_{\infty} - V(\varepsilon x))v_{n} ^{2}-\int_{\mathbb R^N}\phi_{\varepsilon, v_n}v_n^2+o_{n}(1),
\]
and thus
\begin{equation}\label{bo}
\int_{\mathbb R^N}\Big(\frac{f(t_{n}v_{n})}{t_{n}}- f(v_{n})\Big)v_{n} \leq \int_{\mathbb R^N}(V_{\infty} - V(\varepsilon x))v_{n} ^{2}+o_{n}(1).
\end{equation}
 Using \eqref{eq:conseqV1}, the fact that
$v_{n}\rightarrow 0$ in $L^{2}(B_{\widetilde  R})$ and that $\{v_{n}\}$ is bounded in $W_{\varepsilon}$, 
let us say by some constant $C>0$,
we deduce by \eqref{bo}
\begin{eqnarray}\label{mudou}
\forall \xi>0: \int_{\mathbb R^N}\left(\frac{f(t_{n}v_{n})}{t_{n}} -f(v_{n})\right)v_{n} \leq \xi  C +o_{n}(1).
\end{eqnarray}
Since $v_{n}\not\rightarrow 0$ in $W_{\varepsilon}$, we may invoke Lemma \ref{gio5} to obtain $\{y_{n}\}\subset \mathbb R^{N}$ and ${R},{c} >0$ such that
\begin{eqnarray}\label{1eq4}
\int_{B_{ R}( y_{n})} v_{n}^{2}\geq{c}.
\end{eqnarray}
Defining $\check{v}_{n} := v_{n}(\cdot+ y_{n})$, we may suppose that, up to a subsequence,
$$
\check{v}_{n}\rightharpoonup \check{v} \,\,\, \mbox{in}\,\,\, H^{s}(\mathbb R^{N})
$$
and, in view of (\ref{1eq4}), there exists a subset $\Omega \subset \mathbb R^{N}$ with positive measure such that $\check{v}>0$ in $\Omega$. 
By \eqref{f_{5}} and  \eqref{limsup}, \eqref{mudou} becomes
\begin{eqnarray*}
0 < \int_{\Omega}\left(\frac{f((1 + \delta)\check{v}_{n})}{(1+ \delta)\check{v}_{n}}-\frac{f(\check{v}_{n})}{\check{v}_{n}}\right)\check{v}_{n}^{2} \leq \xi C + o_{n}(1).
\end{eqnarray*}
Now passing to the limit  and applying Fatou's Lemma, it follows that, for every $\xi>0$
$$
0<\int_{\Omega}\biggl[\frac{f((1 + \delta)\check{v})}{(1 +\delta)\check{v}}-\frac{f(\check{v})}{\check{v}}\biggl]\check{v}^{2}\leq \xi C, 
$$
which is  absurd and proves the claim.

\medskip 

Now we distinguish two cases.

\medskip

\noindent \textbf{Case 1:} $ \limsup_{n \to \infty}t_{n}=1.$ \smallskip

Up to subsequence we can assume  that $t_{n}\rightarrow 1$. 
We have,
\begin{equation}\label{1eq5}
d+ o_n( 1)=I_{\varepsilon}(v_{n}) \geq m^{\infty}_{ V_\infty } + I_\varepsilon ( v_n) -E_{V_\infty}( t_nv_n) .
\end{equation}
Moreover,
\begin{equation*}
\begin{split}
I_{\varepsilon}(v_{n})- E_{V_{\infty}}(t_{n}v_{n})&=\frac{(1-t_{n}^{2})}{2}\int_{\mathbb R^{N}}|(-\Delta)^{s/2} v_{n}|^{2}+
\frac{1}{2}\int_{\mathbb R^{N}}(V(\varepsilon x)-t_{n}^2V_{\infty})v_{n}^{2}\\
& \quad +\frac{1}{4}\int_{\mathbb R^n}\phi_{\varepsilon, v_n}v_n^2+\int_{\mathbb R^{N}}(F(t_{n}v_{n}) - 
F(v_{n})),
\end{split}
\end{equation*}
and due to the boundedness of $\{v_{n}\}$ we get, for every $\xi>0$,
\begin{eqnarray*}
I_{\varepsilon}(v_{n}) - E_{V_{\infty}}(t_{n}v_{n}) \geq \ o_{n}(1) -C \xi + \int_{\mathbb R^{N}}(F(t_{n}v_{n}) - F(v_{n})),
\end{eqnarray*}
where we have used again \eqref{eq:conseqV1}.
By the Mean Value Theorem,
$\int_{\mathbb R^{N}}(F(t_{n}v_{n}) - F(v_{n}))= o_{n}(1),$
therefore \eqref{1eq5} becomes
\begin{eqnarray*}
d + o_{n}(1) \geq m^{\infty}_{V_{\infty}} - C \xi + o_{n}(1),
\end{eqnarray*}
and taking the limit in $n$, by the arbitrariness of $\xi$,  we deduce $d \geq m^{\infty}_{{V_{\infty}}}$.

\medskip

\noindent \textbf{Case 2:} $\limsup_{n \to \infty}t_{n}=t_{0} <1$. \smallskip

We can assume $t_{n} \to t_{0} \,\,\, \mbox{and} \,\,\, t_{n} < 1$.
Since $t\mapsto \frac{1}{4}f(t)t-F(t)$ is increasing in $\left( 0, \infty\right)$,
\begin{align}\label{eq:case2_1}
m^{\infty}_{V_{\infty}} \leq E_{V_\infty}( t_n v_n) &= \int_{\mathbb R^{N}}\left(\frac{1}{2}f(t_{n}v_{n})t_{n}v_{n}-F(t_{n}v_{n})\right) \nonumber \\
&= \int_{\mathbb R^{N}}\frac{1}{4}f(t_{n}v_{n})t_{n}v_{n}+\int_{\mathbb R^{N}}\left(\frac{1}{4}f(t_{n}v_{n})t_{n}v_{n}-F(t_{n}v_{n})\right)\nonumber \\
&= \frac{1}{4}\norm{t_n v_n}^2_{H^{s}_{V_\infty}}+\int_{\mathbb R^{N}}\left(\frac{1}{4}f(t_{n}v_{n})t_{n}v_{n}-F(t_{n}v_{n})\right)\nonumber \\
&\leq \frac{1}{4}\norm{t_n v_n}^2_{H^{s}_{V_\infty}}+\int_{\mathbb R^{N}}\left(\frac{1}{4}f(v_{n})v_{n}-F(v_{n})\right).
\end{align}
But
\begin{equation}\label{case2_2}
\norm{t_n v_n}^2_{V_\infty}\leq \int_{\mathbb R^N}\abs{(-\Delta)^{s/2} v_n}^2+\int_{\mathbb R^N} t_n^2V_\infty  v_n^2.
\end{equation}
Again by \eqref{eq:conseqV1}, given $\xi>0,$
\[
t_n^2V_\infty-\xi <V_\infty-\xi < V( \varepsilon x)\quad \text{ for } {x}\notin B_{\widetilde R}
\]
and hence
\begin{align*}
\int_{\mathbb R^N} t_n^2V_\infty  v_n^2&\leq \int_{B_{\widetilde R}}V_\infty v_n^2+\int_{\abs{x}\geq \widetilde R}V( \varepsilon x ) v_n^2+\int_{\abs{x}\geq \widetilde R} \xi v_n^2\\
&\leq o_n( 1)+\int_{\mathbb R^N}V( \varepsilon x ) v_n^2+C\xi.
\end{align*}
From this and \eqref{case2_2} we have
\[
\norm{t_n v_n}^2_{H^{s}_{{V_\infty}}}\leq \norm{v_n}_\varepsilon ^2+C\xi +o_n( 1).
\]
Therefore, using \eqref{eq:case2_1}
\begin{align*}
m^{\infty}_{ V_\infty } & \leq  \frac{1}{4}\norm{v_n}^2_\varepsilon+\int_{\mathbb R^N} \left( \frac{1}{4}f( v_n)v_n -F( v_n)\right)  +C\xi +o_n( 1)\\
&= I_\varepsilon (v_n)-\frac{1}{4}I'_\varepsilon ( v_n) [ v_n]  +C\xi +o_n( 1)\\
& = d+C\xi +o_{n}(1).
\end{align*}
concluding the proof.
\end{proof}
\begin{proposition}\label{gio11}
The functional $I_{\varepsilon}$  in $W_{\varepsilon}$ satisfies the $(PS)_{c}$ condition \smallskip
\begin{itemize}
\item[1.] at any level $c < m^{\infty}_{V_{\infty}}$, if $V_{\infty}<\infty$, \medskip
\item[2.] at any level $c\in\mathbb R$, if $V_{\infty}=\infty$.
\end{itemize}
\end{proposition}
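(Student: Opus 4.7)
The plan is to follow the standard route for $(PS)_c$ on $\mathbb{R}^N$: boundedness, weak convergence to a critical point, Brezis--Lieb splitting of the residual, and a case distinction based on whether $V_\infty$ is finite.

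First, I take a $(PS)_c$ sequence $\{u_n\}\subset W_\varepsilon$. Its boundedness has already been established in the opening paragraphs of this section. Passing to a subsequence, $u_n\rightharpoonup u$ in $W_\varepsilon$ (hence a.e.\ and in $L^r_{\text{loc}}$ for every $r\in[2,2^*_s)$). Standard subcritical arguments handle $\int f(u_n)\varphi\to\int f(u)\varphi$ for test functions $\varphi\in C_c^\infty(\mathbb{R}^N)$; the Coulomb term passes by Lemma \ref{propiedades}$(iii)$, since $\phi_{\varepsilon,u_n}\rightharpoonup\phi_{\varepsilon,u}$ in $\dot H^{\alpha/2}$ and so $\int\phi_{\varepsilon,u_n}u_n\varphi\to\int\phi_{\varepsilon,u}u\varphi$ using the strong local $L^2$ convergence of $u_n$ on $\mathrm{supp}\,\varphi$. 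Thus $I'_\varepsilon(u)=0$, and the Nehari-type identity together with \eqref{f_{4}} yield
\[
I_\varepsilon(u)=I_\varepsilon(u)-\frac{1}{K}I'_\varepsilon(u)[u]\geq\Big(\frac{1}{2}-\frac{1}{K}\Big)\|u\|_\varepsilon^2\geq 0.
\]

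Next I set $v_n:=u_n-u$, so $v_n\rightharpoonup 0$ in $W_\varepsilon$. Applying the Brezis--Lieb lemma to $\|\cdot\|_\varepsilon^2$, to $\int F(u_n)$ and $\int f(u_n)u_n$ (both legitimate by \eqref{f_{2}}--\eqref{f_{3}} and a.e.\ convergence), and using Lemma \ref{propiedades}$(vi)$ for the Coulomb term $A$, I obtain
\[
I_\varepsilon(v_n)=I_\varepsilon(u_n)-I_\varepsilon(u)+o_n(1)\longrightarrow c-I_\varepsilon(u)=:d,\qquad J_\varepsilon(v_n)=o_n(1),
\]
so that $\{v_n\}$ is a $(PS)_d$ sequence for $I_\varepsilon$ with $v_n\rightharpoonup 0$. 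Proving strong convergence of $u_n$ is therefore equivalent to proving $v_n\to 0$ in $W_\varepsilon$.

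\textbf{Case $V_\infty<\infty$.} Since $I_\varepsilon(u)\geq 0$, the level satisfies $d\leq c<m^\infty_{V_\infty}$. If $v_n\not\to 0$, Lemma \ref{gio9} would give $d\geq m^\infty_{V_\infty}$, a contradiction. Hence $v_n\to 0$ in $W_\varepsilon$ and so $u_n\to u$.

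\textbf{Case $V_\infty=\infty$.} The hypothesis $\liminf_{|x|\to\infty}V(x)=\infty$ makes $V(\varepsilon\cdot)$ coercive, and a standard tightness argument combined with the local compactness of $H^s$ yields a compact embedding $W_\varepsilon\hookrightarrow L^r(\mathbb{R}^N)$ for every $r\in[2,2^*_s)$: indeed, for any $\xi>0$ one has $V(\varepsilon x)\geq 1/\xi$ outside a large ball, so $\int_{|x|>R}v_n^2\leq\xi\int V(\varepsilon x)v_n^2\leq C\xi$, ruling out escape of mass. Thus $u_n\to u$ in every such $L^r$. Lemma \ref{propiedades}$(v)$ gives $A(u_n)\to A(u)$, and \eqref{terminof} with the $L^2$--$L^{q_0+1}$ convergence gives $\int f(u_n)u_n\to\int f(u)u$ and $\int f(u_n)u\to\int f(u)u$. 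Testing $I'_\varepsilon(u_n)\to 0$ against $u_n$ and against $u$ and subtracting, all terms except $\|u_n\|_\varepsilon^2-\|u\|_\varepsilon^2$ vanish in the limit, forcing $\|u_n\|_\varepsilon\to\|u\|_\varepsilon$ and hence strong convergence in the Hilbert space $W_\varepsilon$.

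The main obstacle is the nonlocal Hartree coupling: ensuring that the splitting $I_\varepsilon(u_n)=I_\varepsilon(u)+I_\varepsilon(v_n)+o(1)$ and the analogous identity for $I'_\varepsilon$ survive despite the bilinearity of $A$ is exactly the content of Lemma \ref{propiedades}$(vi)$, which is the linchpin of Case $V_\infty<\infty$. In Case $V_\infty=\infty$, the secondary technical point is carefully justifying the compact embedding from the mere coercivity of $V(\varepsilon\cdot)$; this is routine once one combines the tightness estimate above with the local Rellich--Kondrachov theorem for $H^s$.
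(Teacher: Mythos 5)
Your proposal is correct and follows essentially the same route as the paper: boundedness, weak limit is a critical point with $I_\varepsilon(u)\geq 0$, Brezis--Lieb splitting of $v_n=u_n-u$ via Lemma \ref{propiedades}$(vi)$, then Lemma \ref{gio9} when $V_\infty<\infty$ and the compact embedding $W_\varepsilon\hookrightarrow\hookrightarrow L^r$ when $V_\infty=\infty$. The only (immaterial) deviations are using $\tfrac1K$ instead of $\tfrac14$ in the positivity estimate for $I_\varepsilon(u)$ and, in the coercive case, recovering $\|u_n\|_\varepsilon\to\|u\|_\varepsilon$ directly rather than showing $\|v_n\|_\varepsilon\to 0$.
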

\begin{proof}
Let $\{u_{n}\}\subset W_{\varepsilon}$ be such that $I_{\varepsilon}(u_{n})\rightarrow c$ and $I'_{\varepsilon}(u_{n})\rightarrow 0$. We have already seen that $\{u_{n}\}$ is bounded in $W_{\varepsilon}$. Thus there exists $u \in W_{\varepsilon}$ such that, up to a subsequence, $u_{n}\rightharpoonup u$ in $W_{\varepsilon}$.
Note that $I'_{\varepsilon}(u)=0$, since
 by Lemma \ref{propiedades} $(iv)$, we have for every $w\in W_{\varepsilon}$
\[
\left(u_n,w\right) _\varepsilon \rightarrow (u,w) _\varepsilon, \quad A'( u_n) [ w] \rightarrow A'( u) [ w] \quad \text{and } \int_{\mathbb R^{N}} f( u_n) w \rightarrow \int_{\mathbb R^{N}} f( u) w.
\]
Defining $v_{n}:= u_{n} - u$, we  have that $\int_{\mathbb R^{N}} F(v_{n})=\int_{\mathbb R^{N}}F(u_{n})-\int_{\mathbb R^{N}}F(u)+o_n(1)$ (see  \cite{Alves3}) and 
by Lemma \ref{propiedades} $(vi)$, we have $ A(v_n) =A( u_n)-A( u)+o_n( 1)$; 
hence arguing as in \cite{Alvesgio1}, we obtain also
\begin{equation}\label{eq:0}
I'_{\varepsilon}(v_{n})\rightarrow 0.
\end{equation}
 Moreover
\begin{equation}\label{cd}
I_{\varepsilon}(v_{n})=I_{\varepsilon}(u_{n})-I_{\varepsilon}(u)+o_{n}(1)=c - I_{\varepsilon}(u)+o_{n}(1)=:d +o_{n}(1)
\end{equation}
and \eqref{eq:0} and  \eqref{cd} show that $\{v_{n}\}$ is a $(PS)_{d}$ sequence. By \eqref{f_{4}},
\begin{multline*}
I_{\varepsilon}(u)= I_{\varepsilon}(u)- \frac{1}{4}I'_{\varepsilon}(u)[u]=\frac{1}{4}\norm{u}^2_\varepsilon +\int_{\mathbb R^{N}}
\Big(\frac{1}{4}f(u)u - F(u)\Big) \\
\geq \frac{1}{4}\int_{\mathbb R^{N}}\Big(f(u)u - 4F(u)\Big) \geq 0
\end{multline*}
and then coming back in \eqref{cd} we have
\begin{equation}\label{eq:siusa}
d\leq c.
\end{equation}
Then,  \smallskip

1. if $V_{\infty}<\infty$, and $c<m_{V_{\infty}}^{\infty}$, by \eqref{eq:siusa} we obtain
$$
d\leq c < m^{\infty}_{V_{\infty}}.
$$
It follows from Lemma \ref{gio9} that $v_{n}\rightarrow 0$, that is $u_{n}\rightarrow u$ in $W_{\varepsilon}$. \medskip

 2. If $V_{\infty}= \infty$, by the compact imbedding $W_{\varepsilon} \hookrightarrow\hookrightarrow L^{r}(\mathbb R^N), 2 \leq r < 2^{*}_{s}$,
up to a subsequence, $v_{n}\rightarrow 0$ in $L^{r}(\mathbb R^N)$ and since $I'_{\varepsilon}(v_{n})\rightarrow 0$, we have
\begin{equation}\label{eq:anche}
I'_{\varepsilon}(v_n) [ v_n] =\norm{v_n}_\varepsilon ^2+\int_{\mathbb R^{N}} \phi_{\varepsilon, v_n}v_n^2-\int_{\mathbb R^N}f(v_n)v_n=o_n(1).
\end{equation}

By Lemma \ref{propiedades} $(v)$,
$A(v_n)=\int_{\mathbb R^{N}} \phi_{\varepsilon, v_n}v_n^2=o_n(1)$, and since by  \eqref{terminof}
it holds again $\int_{\mathbb R^{N}}f(v_{n})v_{n}=o_{n}(1)$, we have by \eqref{eq:anche}
$\norm{v_n}_\varepsilon ^2=o_{n}(1)$, that is $u_{n} \to u$ in $W_{\varepsilon}$.

\medskip

The proof is thereby complete.
\end{proof}
As a consequence it is standard to prove that 
\begin{proposition}\label{gio13}
The functional $I_{\varepsilon}$ restricted to $\cal{N}_{\varepsilon}$ satisfies the $(PS)_{c}$ condition 
\begin{itemize}
\item[1.] at any level $c <m_{V_{\infty}}^{\infty}$, if $V_{\infty}<\infty$, \medskip
\item[2.]  at any level $c\in\mathbb R$, if $V_{\infty}=\infty$.
\end{itemize}
Moreover,  the constrained critical points of the functional $I_{\varepsilon}$ on ${\cal N}_{\varepsilon}$ are critical points of $I_{\varepsilon}$ in $W_{\varepsilon}$, hence solution of \eqref{prob2}.
\end{proposition}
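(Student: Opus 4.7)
The plan is the standard Lagrange--multiplier argument: show that any $(PS)_c$ sequence on the constraint $\mathcal N_\varepsilon$ is, up to a vanishing perturbation, a $(PS)_c$ sequence for the free functional $I_\varepsilon$ on $W_\varepsilon$, and then invoke Proposition \ref{gio11}. The technical heart is a uniform lower bound
$$
-J'_\varepsilon(u)[u]\geq \kappa>0\qquad\text{for every $u\in\mathcal N_\varepsilon$,}
$$
which makes Lagrange multipliers vanish in the limit.

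\textbf{Step 1 (key inequality on $\mathcal N_\varepsilon$).} Differentiating $J_\varepsilon(u)=\|u\|_\varepsilon^{2}+A(u)-\int f(u)u$ and using Lemma~\ref{propiedades}$(iv)$,
$$
J'_\varepsilon(u)[u]=2\|u\|_\varepsilon^{2}+4\int_{\mathbb R^{N}}\phi_{\varepsilon,u}u^{2}-\int_{\mathbb R^{N}}\bigl(f'(u)u^{2}+f(u)u\bigr).
$$
Writing $f(t)=t^{3}g(t)$ with $g(t)=f(t)/t^{3}$, assumption \eqref{f_{5}} gives $g'(t)>0$ for $t>0$, whence $f'(t)t=3f(t)+t^{4}g'(t)$ and therefore $\int f'(u)u^{2}=3\int f(u)u+\int u^{5}g'(u)$. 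Substituting the Nehari identity $\|u\|_\varepsilon^{2}+\int\phi_{\varepsilon,u}u^{2}=\int f(u)u$ yields
$$
J'_\varepsilon(u)[u]=-2\|u\|_\varepsilon^{2}-\int_{\mathbb R^{N}}u^{5}g'(u)\leq -2\|u\|_\varepsilon^{2}\leq -2h_\varepsilon^{2}.
$$

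\textbf{Step 2 (PS on constraint $\Rightarrow$ PS in $W_\varepsilon$).} Let $\{u_{n}\}\subset\mathcal N_\varepsilon$ satisfy $I_\varepsilon(u_{n})\to c$ and $\bigl\|\bigl(I_\varepsilon|_{\mathcal N_\varepsilon}\bigr)'(u_{n})\bigr\|_{*}\to 0$. As in Section \ref{compattezza}, $\{u_{n}\}$ is bounded in $W_\varepsilon$ by \eqref{f_{4}}. The Lagrange multiplier rule yields $\lambda_{n}\in\mathbb R$ with
$$
I'_\varepsilon(u_{n})=\lambda_{n}J'_\varepsilon(u_{n})+\eta_{n},\qquad \|\eta_{n}\|_{W_\varepsilon^{*}}\to 0.
$$
Testing with $u_{n}$ gives $0=I'_\varepsilon(u_{n})[u_{n}]=\lambda_{n}J'_\varepsilon(u_{n})[u_{n}]+o(1)$; by Step 1, $|J'_\varepsilon(u_{n})[u_{n}]|\geq 2h_\varepsilon^{2}$, hence $\lambda_{n}\to 0$. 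Since the growth bounds on $f,f'$ and the estimate \eqref{terminoconvolucion} give $\|J'_\varepsilon(u_{n})\|_{W_\varepsilon^{*}}\leq C$, we deduce $\lambda_{n}J'_\varepsilon(u_{n})\to 0$ in $W_\varepsilon^{*}$, so $I'_\varepsilon(u_{n})\to 0$. Proposition~\ref{gio11} then supplies a strongly convergent subsequence $u_{n}\to u$ in $W_\varepsilon$ (at the prescribed level in each of the two cases). Because $\|u_{n}\|_\varepsilon\geq h_\varepsilon$ and $I_\varepsilon,J_\varepsilon$ are continuous, $u\neq 0$ and $J_\varepsilon(u)=0$, i.e.\ $u\in\mathcal N_\varepsilon$.

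\textbf{Step 3 (constrained critical points are free critical points).} If $I'_\varepsilon|_{\mathcal N_\varepsilon}(u)=0$ with $u\in\mathcal N_\varepsilon$, the same Lagrange multiplier argument (with $\eta_{n}\equiv 0$) gives $\lambda\in\mathbb R$ with $I'_\varepsilon(u)=\lambda J'_\varepsilon(u)$. Testing with $u$ and using $I'_\varepsilon(u)[u]=0$ together with $J'_\varepsilon(u)[u]\leq -2h_\varepsilon^{2}<0$ from Step 1 forces $\lambda=0$, so $I'_\varepsilon(u)=0$ in $W_\varepsilon^{*}$, i.e.\ $u$ is a weak solution of \eqref{prob2}.

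The main technical obstacle is Step 1: without the quantitative bound coming from \eqref{f_{5}}, one only knows $J'_\varepsilon(u)[u]<0$ pointwise, which is not enough to make $\lambda_{n}\to 0$ along a PS sequence. Once this uniform separation from zero is in place, the rest is a routine transfer of the PS property from the manifold to the ambient space, where Proposition~\ref{gio11} does the real compactness work.
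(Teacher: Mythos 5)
Your proof is correct and is precisely the standard Lagrange--multiplier argument that the paper invokes without writing out (the paper only says the proposition is ``standard'' as a consequence of Proposition \ref{gio11} and the quantitative version of Lemma 2.2, which your Step 1 re-derives; note \eqref{f_{5}} only gives $g'\geq 0$, but that is all your inequality needs). No discrepancy with the paper's approach.
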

Let us recall the following result (see \cite[Lemma 6]{gaetanogiovany}) concerning 
problem \eqref{prob4}.
\begin{lemma}[Ground state for the autonomous problem]\label{gio16}
Let $\{u_{n}\} \subset {\cal{M}}_{\mu}$ be a sequence satisfying $E_{\mu}(u_{n})\rightarrow m^{\infty}_{\mu}$. Then, up to subsequences the following alternative holds:
\begin{itemize}
\item[a)] $\{u_{n}\}$ strongly converges in $H^{s}(\mathbb R^{N})$; \medskip
\item[b)] there exists a sequence $\{\tilde{y}_{n}\}\subset \mathbb R^{N}$ such that $u_{n}(\cdot+\tilde{y}_{n})$ strongly converges in $H^{s}(\mathbb R^{N})$.
\end{itemize}
In particular, there exists a minimizer $\mathfrak w_{\mu}\geq0$ for $m^{\infty}_{\mu}$.
\end{lemma}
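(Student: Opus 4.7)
The plan is to combine a standard concentration--compactness argument with the variational characterization of $m_\mu^\infty$ on the Nehari manifold. First I verify boundedness: since $u_n\in \mathcal M_\mu$ gives $\|u_n\|^2_{H^s_\mu}=\int f(u_n)u_n$, assumption \eqref{f_{4}} yields
\[
m_\mu^\infty+o_n(1)= E_\mu(u_n)-\tfrac{1}{K}E'_\mu(u_n)[u_n]\ge \Bigl(\tfrac12-\tfrac1K\Bigr)\|u_n\|^2_{H^s_\mu},
\]
so $\{u_n\}$ is bounded in $H^s(\mathbb R^N)$; extract a subsequence with $u_n\rightharpoonup u$.

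Next I apply Lemma \ref{lionslemma} to $\{u_n\}$. If vanishing occurs, then $u_n\to 0$ in $L^p$ for $p\in(2,2^*_s)$; combined with \eqref{terminof} and the $L^2$ bound this forces $\int f(u_n)u_n\to 0$, hence $\|u_n\|_{H^s_\mu}\to 0$, contradicting $E_\mu(u_n)\to m_\mu^\infty>0$. Therefore non-vanishing holds: there exist $\{\tilde y_n\}\subset\mathbb R^N$ and $R,\beta>0$ with $\int_{B_R(\tilde y_n)}u_n^2\ge\beta$. If $\{\tilde y_n\}$ is bounded I take $\tilde y_n=0$ (case $a)$); otherwise $|\tilde y_n|\to\infty$ (case $b)$). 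In both cases, by translation invariance of $E_\mu$, the translates $\tilde u_n:=u_n(\cdot+\tilde y_n)$ still lie in $\mathcal M_\mu$ with $E_\mu(\tilde u_n)\to m_\mu^\infty$, and the local non-vanishing together with the compact embedding $H^s(B_R)\hookrightarrow L^2(B_R)$ forces the weak limit $\tilde u$ to be nontrivial.

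To identify $\tilde u$ as a minimizer, I use that by \eqref{f_{5}} there is a unique $t_*>0$ with $t_*\tilde u\in\mathcal M_\mu$, and that for every $t\ge 0$ one has $E_\mu(t\tilde u_n)\le E_\mu(\tilde u_n)$ since $\tilde u_n\in\mathcal M_\mu$. By weak lower semicontinuity of $\|\cdot\|_{H^s_\mu}$ and Fatou (note $F\ge 0$ because $f\ge 0$),
\[
m_\mu^\infty\le E_\mu(t_*\tilde u)\le \liminf_n E_\mu(t_*\tilde u_n)\le \liminf_n E_\mu(\tilde u_n)=m_\mu^\infty.
\]
A parallel Fatou argument applied to $E_\mu(\tilde u_n)-\tfrac14 E'_\mu(\tilde u_n)[\tilde u_n]$, whose integrand is non-negative by \eqref{f_{4}}, also gives $E_\mu(\tilde u)-\tfrac14 E'_\mu(\tilde u)[\tilde u]\le m_\mu^\infty$; combining these forces $t_*=1$, hence $\tilde u\in\mathcal M_\mu$ and $E_\mu(\tilde u)=m_\mu^\infty$.

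The main obstacle is then upgrading weak convergence to strong convergence of $\tilde u_n$ to $\tilde u$. For this I apply Brezis--Lieb splitting (on $\|\cdot\|^2_{H^s_\mu}$, and on $\int F$ and $\int f(\cdot)\cdot$ in the spirit of \cite{Alves3}) to $v_n:=\tilde u_n-\tilde u$: this yields $E_\mu(v_n)\to 0$ together with $\|v_n\|^2_{H^s_\mu}-\int f(v_n)v_n\to 0$. A second application of Lemma \ref{lionslemma} to $\{v_n\}$ combined with the argument used to rule out vanishing above then gives $\|v_n\|_{H^s_\mu}\to 0$, namely $\tilde u_n\to\tilde u$ in $H^s(\mathbb R^N)$. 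Finally, testing the relation $E'_\mu(\tilde u)=0$ with the negative part $\tilde u^{-}$ and using that $f\equiv 0$ on $(-\infty,0]$ gives $\tilde u\ge 0$, producing the desired non-negative ground state $\mathfrak w_\mu$.
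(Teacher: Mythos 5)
Your overall architecture---boundedness via \eqref{f_{4}}, the Lions dichotomy from Lemma \ref{lionslemma} to exclude vanishing (using $m^{\infty}_{\mu}>0$), translation to produce a nontrivial weak limit, and a Brezis--Lieb splitting to upgrade to strong convergence---is the standard one (note the paper does not prove this lemma itself but imports it from \cite[Lemma 6]{gaetanogiovany}). There is, however, a genuine gap at the crux of the argument, namely the chain
\[
m_\mu^\infty\le E_\mu(t_*\tilde u)\le \liminf_n E_\mu(t_*\tilde u_n)\le m_\mu^\infty .
\]
The middle inequality is not justified by ``weak lower semicontinuity of the norm and Fatou'': writing $E_\mu(t_*w)=\frac{t_*^2}{2}\|w\|^2_{H^s_\mu}-\int F(t_*w)$, Fatou gives $\int F(t_*\tilde u)\le\liminf_n\int F(t_*\tilde u_n)$, and this term enters with a \emph{minus} sign, so the inequality points the wrong way. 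This is exactly the point where mass escaping to infinity could ruin the argument, i.e.\ it is the whole difficulty of the lemma, not a formality. Two standard repairs: (i) use Ekeland's variational principle to replace $\{u_n\}$ by a $(PS)_{m^\infty_\mu}$ sequence for the free functional $E_\mu$, deduce $E'_\mu(\tilde u)=0$, hence $\tilde u\in\mathcal M_\mu$ (so $t_*=1$ automatically), and then apply weak lower semicontinuity to the representation $E_\mu(u)-\frac1K E'_\mu(u)[u]=(\frac12-\frac1K)\|u\|^2_{H^s_\mu}+\frac1K\int(f(u)u-KF(u))$, whose two pieces are respectively weakly l.s.c.\ and nonnegative (Fatou); since the resulting lower bounds must both be attained, one also gets $\|\tilde u_n\|_{H^s_\mu}\to\|\tilde u\|_{H^s_\mu}$ and hence strong convergence in one stroke; or (ii) first prove $t_*\le 1$ and use that $t\mapsto\frac12 f(t)t-F(t)$ is nonnegative and nondecreasing (a consequence of \eqref{f_{4}}--\eqref{f_{5}}) so that Fatou applies to $E_\mu\restriction_{\mathcal M_\mu}=\int(\frac12 f(u)u-F(u))$ in the correct direction. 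As written, your ``parallel Fatou argument'' only yields $E_\mu(\tilde u)-\frac14E'_\mu(\tilde u)[\tilde u]\le m^\infty_\mu$, and it is not explained how this combines with the (unjustified) first chain to force $t_*=1$.

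Two smaller points. For $t_*$ to exist you need $\tilde u^{+}\not\equiv 0$, not merely $\tilde u\not\equiv 0$; this is obtained by running the Lions argument on $u_n^{+}$, since $\int f(u_n)u_n=\int f(u_n^{+})u_n^{+}$. And in the final step, the non-vanishing alternative for $v_n=\tilde u_n-\tilde u$ is not excluded by ``the argument used above'': there the sequence lay on $\mathcal M_\mu$ with energy tending to $m^\infty_\mu>0$, whereas $v_n$ only satisfies $E'_\mu(v_n)[v_n]=o_n(1)$ with $E_\mu(v_n)\to 0$. You must project $v_n$ onto $\mathcal M_\mu$ and use the inequality $E_\mu(tv_n)\le E_\mu(v_n)+\frac{t^2-1}{2}E'_\mu(v_n)[v_n]$ (valid under \eqref{f_{5}}), together with boundedness of the projection parameters, to reach the contradiction $m^\infty_\mu\le o_n(1)$.
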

Now we can prove the existence of a ground state for our problem. Assumption \eqref{H} is tacitly assumed.
\begin{theorem}\label{gio12}
Suppose that $f$ verifies \eqref{f_{1}}-\eqref{f_{5}} and $V$ verifies \eqref{V}. Then there exists a ground state solution $\mathfrak u_{\varepsilon}\in W_{\varepsilon}$ of \eqref{prob2}, \smallskip
\begin{itemize}
\item[1.] for every  $\varepsilon\in (0,\bar\varepsilon]$, for some $\bar\varepsilon>0$, if $V_{\infty}<\infty$; \medskip
\item[2.] for every $\varepsilon>0$, if  $V_{\infty}=\infty$.
\end{itemize} 
\end{theorem}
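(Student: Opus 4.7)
\medskip

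\noindent\textbf{Proof plan for Theorem \ref{gio12}.} The plan is to realize $\mathfrak u_\varepsilon$ as a minimizer of $I_\varepsilon$ on $\mathcal N_\varepsilon$, exploiting the identity $m_\varepsilon = c_\varepsilon$ in \eqref{eq:defce} and the compactness analysis of Propositions \ref{gio11}--\ref{gio13}. Since $I_\varepsilon$ has the Mountain Pass geometry verified above, Ekeland's Variational Principle applied to $I_\varepsilon|_{\mathcal N_\varepsilon}$ (or the Mountain Pass Theorem directly) produces a sequence $\{u_n\}\subset W_\varepsilon$ with $I_\varepsilon(u_n)\to m_\varepsilon$ and $I'_\varepsilon(u_n)\to 0$. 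The boundedness argument at the beginning of Section \ref{compattezza} shows $\{u_n\}$ is bounded in $W_\varepsilon$, and it remains to upgrade this to strong convergence to a nontrivial limit.

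\medskip

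\noindent Case $V_\infty=\infty$ is immediate: Proposition \ref{gio11}.2 applies at every level, so $u_n\to\mathfrak u_\varepsilon$ strongly in $W_\varepsilon$ for every $\varepsilon>0$. Since $\mathcal N_\varepsilon$ is closed and $\inf_{\mathcal N_\varepsilon}\|\cdot\|_\varepsilon\geq h_\varepsilon>0$, the limit $\mathfrak u_\varepsilon$ lies in $\mathcal N_\varepsilon$ and attains $m_\varepsilon$.

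\medskip

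\noindent The main obstacle is the case $V_\infty<\infty$, where we must show that $m_\varepsilon<m^\infty_{V_\infty}$ for all sufficiently small $\varepsilon$, so that Proposition \ref{gio11}.1 applies. I would argue in two steps. First, by Lemma \ref{gio16} pick a nonnegative ground state $\mathfrak w\in\mathcal M_{V_0}$ of the autonomous problem $(A_{V_0})$. Since $0\in M$ and $V$ is continuous and bounded, dominated convergence gives $\int V(\varepsilon x)\mathfrak w^2\to V_0\int\mathfrak w^2$, while $\int\phi_{\varepsilon,t\mathfrak w}(t\mathfrak w)^2 = \varepsilon^{\alpha-\theta}t^4\int\phi_{1,\mathfrak w}\mathfrak w^2\to 0$ since $\alpha>\theta$. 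Hence
\[
I_\varepsilon(t\mathfrak w)\longrightarrow E_{V_0}(t\mathfrak w)\quad\text{as }\varepsilon\to 0^+,
\]
uniformly for $t$ in the bounded interval where the sup is attained (boundedness of the maximizers follows from \eqref{f_{4}}, which forces $I_\varepsilon(t\mathfrak w)\to-\infty$ as $t\to\infty$ at rate $-Ct^K$). Consequently $\sup_{t\geq 0}I_\varepsilon(t\mathfrak w)\to\sup_{t\geq 0}E_{V_0}(t\mathfrak w)=m^\infty_{V_0}$, so $m_\varepsilon\leq\sup_{t\geq 0}I_\varepsilon(t\mathfrak w)\to m^\infty_{V_0}$. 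Second, the strict inequality $m^\infty_{V_0}<m^\infty_{V_\infty}$ follows from $V_0<V_\infty$: taking any ground state $\mathfrak w_\infty$ of $(A_{V_\infty})$ and the unique $t_0>0$ with $t_0\mathfrak w_\infty\in\mathcal M_{V_0}$, one has
\[
m^\infty_{V_0}\leq E_{V_0}(t_0\mathfrak w_\infty)\leq\sup_{t\geq 0}E_{V_0}(t\mathfrak w_\infty)<\sup_{t\geq 0}E_{V_\infty}(t\mathfrak w_\infty)=m^\infty_{V_\infty},
\]
the strict inequality being due to $\int\mathfrak w_\infty^2>0$. Combining, $m_\varepsilon<m^\infty_{V_\infty}$ for every $\varepsilon\in(0,\bar\varepsilon]$ with $\bar\varepsilon$ small enough, and Proposition \ref{gio11}.1 delivers the strong limit $\mathfrak u_\varepsilon\in\mathcal N_\varepsilon$ minimizing $I_\varepsilon$.

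\medskip

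\noindent It remains to verify that $\mathfrak u_\varepsilon$ can be taken nonnegative (and nontrivial). By Proposition \ref{gio13}, $I'_\varepsilon(\mathfrak u_\varepsilon)=0$ in $W_\varepsilon$. Testing against $\mathfrak u_\varepsilon^-$ and using $\mathfrak u_\varepsilon\mathfrak u_\varepsilon^-=-(\mathfrak u_\varepsilon^-)^2$, together with the vanishing $\int f(\mathfrak u_\varepsilon)\mathfrak u_\varepsilon^-=0$ ensured by \eqref{f_{1}} and the standard fractional inequality $\int(-\Delta)^{s/2}u\,(-\Delta)^{s/2}u^-\leq-\|u^-\|^2_{\dot H^s}$ (coming from the double integral formula and disjoint supports of $u^\pm$), one gets
\[
-\|\mathfrak u_\varepsilon^-\|_{\dot H^s}^2\geq\int V(\varepsilon x)(\mathfrak u_\varepsilon^-)^2+\int\phi_{\varepsilon,\mathfrak u_\varepsilon}(\mathfrak u_\varepsilon^-)^2\geq 0,
\]
forcing $\mathfrak u_\varepsilon^-\equiv 0$. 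Since $\mathfrak u_\varepsilon\in\mathcal N_\varepsilon$ is nonzero, $\mathfrak u_\varepsilon\geq 0$ is the desired positive ground state. The hardest point is the strict energy estimate $m_\varepsilon<m^\infty_{V_\infty}$: everything else is a routine combination of the preparatory lemmas.
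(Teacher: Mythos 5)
Your overall skeleton is the same as the paper's (mountain pass sequence, boundedness via \eqref{f_{4}}, then Proposition \ref{gio11}, the only real work being the strict estimate $c_\varepsilon<m^{\infty}_{V_{\infty}}$ when $V_\infty<\infty$), and your treatment of the case $V_\infty=\infty$, the strict inequality $m^{\infty}_{V_{0}}<m^{\infty}_{V_{\infty}}$, and the nonnegativity of the limit are all fine. The gap is in your first step of the key estimate: you test with the \emph{untruncated} autonomous ground state $\mathfrak w=\mathfrak w_{V_{0}}$ and invoke dominated convergence ``since $V$ is continuous and bounded''. Boundedness of $V$ is not part of \eqref{V}: when $V_\infty<\infty$ only $\liminf_{|x|\to\infty}V$ is finite, and $V$ may be unbounded above. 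Then two things break. First, it is not guaranteed that $\mathfrak w_{V_{0}}\in W_\varepsilon$ at all: $\int_{\mathbb R^N}V(\varepsilon x)\mathfrak w_{V_{0}}^2$ may diverge, in which case $I_\varepsilon(t\mathfrak w_{V_{0}})=+\infty$ and the bound $m_\varepsilon\le\sup_{t\ge0}I_\varepsilon(t\mathfrak w_{V_{0}})$ is vacuous. Second, even when the integral is finite for each $\varepsilon$, dominated convergence needs an $\varepsilon$-independent dominating function; the natural candidate $\sup_{0<\varepsilon\le1}V(\varepsilon x)\le\sup_{|y|\le|x|}V(y)$ can grow faster than the (only algebraic) decay of $\mathfrak w_{V_{0}}^2$, so $\int(V(\varepsilon x)-V_0)\mathfrak w_{V_{0}}^2\to0$ is not justified under \eqref{V} alone.

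This is exactly the difficulty the paper's proof is engineered to avoid: it fixes an intermediate level $\mu\in(V_0,V_\infty)$, truncates the ground state $\mathfrak w_\mu$ by a cut-off and projects the truncation onto $\mathcal M_\mu$, obtaining a \emph{compactly supported} test function $\omega$ with $E_\mu(\omega)<m^{\infty}_{V_{\infty}}$. Compact support makes $\int V(\varepsilon x)\omega^2$ finite trivially and gives $V(\varepsilon x)\le\frac{V_0+\mu}{2}$ uniformly on $\operatorname{supp}\omega$ for small $\varepsilon$, while the slack $\mu-V_0$ absorbs both this potential error and the $\varepsilon^{\alpha-\theta}$ Poisson term (after showing the Nehari projections $t_\varepsilon$ stay bounded, an argument you would also need for your maximizers); then $c_\varepsilon\le I_\varepsilon(t_\varepsilon\omega)\le E_\mu(t_\varepsilon\omega)\le E_\mu(\omega)<m^{\infty}_{V_{\infty}}$. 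Your argument can be repaired along the same lines — truncate $\mathfrak w_{V_{0}}$, reproject, and use the strict gap $m^{\infty}_{V_{0}}<m^{\infty}_{V_{\infty}}$ to survive the truncation error — but as written the step ``$m_\varepsilon\le\sup_t I_\varepsilon(t\mathfrak w_{V_{0}})\to m^{\infty}_{V_{0}}$'' does not follow from the stated hypotheses.
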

\begin{proof}
Since the functional $I_{\varepsilon}$ has the geometry of the Mountain Pass Theorem in $W_{\varepsilon}$
there exists $\{u_{n}\}\subset  W_{\varepsilon}$ satisfying
$$
I_{\varepsilon}(u_{n})\rightarrow c_{\varepsilon}\,\,\, \mbox{and}\,\,\,
I'_{\varepsilon}(u_{n})\rightarrow 0.
$$

\medskip

 1. If $V_{\infty} < \infty$, 
in virtue of  Proposition \ref{gio11}, we have only to show that 
$c_{\varepsilon} < m^{\infty}_{V_{\infty}}$ for every positive $\varepsilon$ smaller than a certain $\bar \varepsilon$. 

Let $\mu \in (V_{0}, V_{\infty})$, so that 
\begin{equation}\label{contradiz}
m^{\infty}_{V_{0}}<m^{\infty}_{\mu}<m^{\infty}_{V_{\infty}}.
\end{equation}
For $r>0$ let $\eta_{r}$ be a smooth cut-off function in $\mathbb R^{N}$ which equals $1$ on $B_{r}$ and with support in $B_{2r}$. Let $w_{r}:=\eta_{r} \mathfrak w_{\mu}$ and $s_{r}>0$ such that $s_{r}w_{r}\in \mathcal M_{\mu}$. If it were, for every $r>0: E_{\mu}(s_{r}w_{r})\geq m^{\infty}_{V_{\infty}}$, since $w_{r}\to \mathfrak w_{\mu}$ in $H^{s}(\mathbb R^{N})$ for $r\to +\infty$, we would have $s_{r}\to 1$ and then
$$m^{\infty}_{V_{\infty}}\leq\liminf_{r\to+\infty}E_{\mu}(s_{r}w_{r})=E_{\mu}(\mathfrak w_{\mu})=m^{\infty}_{\mu}$$
which contradicts \eqref{contradiz}. This means that there exists $\overline r>0$ such that $\omega:= s_{\bar r} w_{\bar r}\in\mathcal M_{\mu}$ satisfies 
\begin{equation}\label{eq:fundamental}
E_{\mu}(\omega)<m^{\infty}_{V_{\infty}}.
\end{equation}
Given $\varepsilon >0$, let $t_\varepsilon>0$ the number such that $t_\varepsilon \, \omega \in \mathcal N_\varepsilon$. Therefore
\[
t_\varepsilon ^2 \norm{\omega}^2_\varepsilon +t_\varepsilon ^4\int_{\mathbb R^N} \phi_{\varepsilon, \omega}\, \omega ^2=t_\varepsilon \int_{\mathbb R^N} f( t_\varepsilon \, \omega ) \omega
\]
implying that
\begin{equation}\label{desigualdad1}
\frac{\norm{\omega}^2_\varepsilon}{t_\varepsilon ^2}+\int_{\mathbb R^N} \phi_{\varepsilon, \omega}\, \omega ^2\geq \int_{B_{\overline r}} \frac{f\left( t_\varepsilon \, \omega \right)}{( t_\varepsilon \, \omega ) ^3} \omega ^4.
\end{equation}
Now we claim that there exists $T>0$ such that $\limsup_{\varepsilon \to 0^+} t_\varepsilon \leq T$. 
If by contradiction there exists 
$\varepsilon _n \to 0^+$ with $t_{\varepsilon _n} \to \infty$, then by \eqref{desigualdad1} and \eqref{f_{5}} we have
\begin{equation}\label{desigualdad2}
\frac{\norm{\omega}^2_{\varepsilon_n}}{t_{\varepsilon_n} ^2}+\int_{\mathbb R^N} \phi_{\varepsilon_n, \omega}\, \omega ^2\geq \frac{f( t_{\varepsilon_n} \, \omega ( \overline x ) )}{( t_{\varepsilon_n} \, \omega ( \overline x ) ) ^3} \int_{B_{\overline r}}  \omega ^4,
\end{equation}
where $\omega ( \overline x ):= \min_{ B_{\overline r}} \omega \left( x \right)$. 
The absurd is achieved by passing to the limit in $n$, since by \eqref{f_{4}}  the right hand side of \eqref{desigualdad2} tends to $\infty$, while the left hand side tends to $0$.

Then there exists $\varepsilon_{1}>0$ such that
\begin{equation}\label{desigualdad3}
\forall \varepsilon\in (0, \varepsilon_{1}]: \quad t_\varepsilon\in (0,T].
\end{equation}
Condition \eqref{V} implies also that there exists  some $\varepsilon _2>0$ such that
\begin{equation}\label{desigualdad4}
\forall \varepsilon\in (0,\varepsilon_{2}]: \quad V(\varepsilon x)\leq \frac{V_0 +\mu}{2},\,\,\, \mbox{ for all }\,\,\,x \in \mbox{supp}\, \omega.
\end{equation}
Finally let $$\varepsilon_{3}:= \left(\frac{(\mu - V_{0})|\omega|_{2}^{2}}{C_{\textrm e} \, T^{2}\|\omega\|^{4}} \right)^{1/(\alpha-\theta)},$$
where $ C_{\textrm e}$ is the same constant appearing in \eqref{eq:esaseusa}, hence in particular
\begin{equation}\label{desigualdad4A}
\forall \varepsilon\in (0,\varepsilon_{3}]: \quad
\int_{\mathbb R^{N}} \phi_{\varepsilon,\omega} \omega^{2}  \leq \varepsilon^{\alpha-\theta}  C_{\textrm e} \|\omega\|^{4}\quad  \text{and} \quad T^2 \varepsilon ^{\alpha -\theta} C_{\textrm e}\norm{\omega}^{4} \leq (\mu -V_0)\int_{\mathbb R^N} \omega ^2. 
\end{equation}
Let $\bar \varepsilon:= \min\{\varepsilon_{1}, \varepsilon_{2}, \varepsilon_{3}\}$.  
By using \eqref{desigualdad3}-\eqref{desigualdad4A} we have, for every $\varepsilon\in 
(0,\bar \varepsilon]$:
$$
\int_{\mathbb R^N} V(\varepsilon x) \omega ^2+ \frac{t_\varepsilon ^2}{2} \int_{\mathbb R^N} \phi_{\varepsilon, \omega}\, \omega^2 
 \leq  \frac{V_{0}+\mu}{2} |\omega|_{2}^{2} + \frac{1}{2}T^{2} \varepsilon^{\alpha-\theta} C_{\textrm e}\|\omega\|^{4}\leq  
\mu \int_{\mathbb R^N} \omega ^2, 
$$
from which we infer $ I_{\varepsilon}(t_{\varepsilon} \omega )\leq E_{\mu}(t_{\varepsilon}\omega)$.
%
Then by  \eqref{eq:defce} and 
\eqref{eq:fundamental},
\[
c_{\varepsilon}\leq  I_\varepsilon ( t_\varepsilon\, \omega ) \leq E_\mu \left( t_\varepsilon\, \omega \right) \leq E_\mu ( \omega)<m_{V_{\infty}}^{\infty}. 
\]
which concludes the proof in this case. \medskip

 2. If $V_{\infty} = \infty$, by Proposition \ref{gio11}, $\{u_{n}\}$ strongly converges to some $\mathfrak u_{\varepsilon}$ in $H^{s}(\mathbb R^{N})$, which satisfies
$$
I_{\varepsilon}(\mathfrak u_{\varepsilon})=c_{\varepsilon}\,\,\, \mbox{and}\,\,\,
I'_{\varepsilon}(\mathfrak u_{\varepsilon})=0.
$$
and $\mathfrak u_{\varepsilon}$ is the ground state we were looking for.
\end{proof}
\section{Proof of Theorem \ref{th:main}}\label{sec:bary}
In this Section we introduce the barycenter map in order to study the ``topological complexity''
of suitable sublevels of the functional $I_{\varepsilon}$ in the Nehari manifold. Let us start with the following
\begin{proposition}\label{gio17}
Let $\varepsilon_{n}\rightarrow 0^+$ and $u_n \in {\cal{N}}_{\varepsilon_{n}}$ be such that 
$I_{\varepsilon_{n}}(u_{n})\rightarrow m^{\infty}_{V_{0}}$. Then there exists a sequence $\{\tilde{y}_{n}\}\subset \mathbb R^{N}$ such that $u_{n}(\cdot+\tilde{y}_{n})$ has a convergent subsequence in $H^{s}(\mathbb R^{N})$. Moreover, up to a subsequence, $y_{n}:=\varepsilon_{n}\tilde{y}_{n}\rightarrow y \in M$.
\end{proposition}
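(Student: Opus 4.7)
I would proceed in three main steps: non-vanishing of $\{u_n\}$ after a suitable translation, boundedness of $\{y_n := \varepsilon_n \tilde y_n\}$, and identification $y \in M$ with strong convergence.

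\emph{Step 1 (non-vanishing in $H^s$).} Since $u_n \in \mathcal N_{\varepsilon_n}$, assumption \eqref{f_{4}} gives the standard identity
\[
I_{\varepsilon_n}(u_n) = I_{\varepsilon_n}(u_n) - \tfrac{1}{K}\,I'_{\varepsilon_n}(u_n)[u_n] \geq \Bigl(\tfrac{1}{2}-\tfrac{1}{K}\Bigr)\|u_n\|_{\varepsilon_n}^{2},
\]
so $I_{\varepsilon_n}(u_n) \to m^\infty_{V_0}$ combined with $V \geq V_0 > 0$ implies boundedness of $\{u_n\}$ in $H^s(\mathbb R^N)$. Inserting \eqref{terminof} in the Nehari identity also yields $\|u_n\|_{\varepsilon_n} \geq h > 0$ uniformly, so by the Lions-type argument in Lemma \ref{gio5} the sequence does not vanish: there exist $\tilde y_n \in \mathbb R^N$ and $R, c > 0$ with $\int_{B_R(\tilde y_n)} u_n^2 \geq c$. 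Setting $v_n(x) := u_n(x + \tilde y_n)$, up to a subsequence $v_n \rightharpoonup v \not\equiv 0$ in $H^s(\mathbb R^N)$.

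\emph{Step 2 (boundedness of $y_n$).} Assume for contradiction $|y_n| \to +\infty$. By a change of variables the potential contribution becomes $\int V(\varepsilon_n x + y_n) v_n^{2}$. If $V_\infty = +\infty$, then $V(\varepsilon_n x + y_n) \to +\infty$ pointwise, and Fatou together with $v \not\equiv 0$ gives $\int V(\varepsilon_n x + y_n) v_n^{2} \to \infty$, contradicting the uniform bound on $\|u_n\|_{\varepsilon_n}^{2}$. If $V_\infty < +\infty$, I would pick $t_n > 0$ such that $t_n v_n \in \mathcal M_{V_\infty}$ and, arguing as in the Claim of Lemma \ref{gio9} via \eqref{f_{5}} and \eqref{eq:conseqV1}, obtain $\limsup t_n \leq 1$. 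Since $u_n \in \mathcal N_{\varepsilon_n}$ gives $I_{\varepsilon_n}(u_n) = \max_{t\geq 0} I_{\varepsilon_n}(tu_n) \geq I_{\varepsilon_n}(t_n u_n)$, after translating, noting that $\int \phi_{\varepsilon_n, v_n} v_n^{2} \leq C\,\varepsilon_n^{\alpha-\theta}\|v_n\|^{4} \to 0$ by \eqref{eq:esaseusa}, and applying Fatou to the potential via \eqref{eq:conseqV1}, we arrive at
\[
m^\infty_{V_0} = \lim I_{\varepsilon_n}(u_n) \;\geq\; \liminf E_{V_\infty}(t_n v_n) \;\geq\; m^\infty_{V_\infty},
\]
contradicting the strict monotonicity $V_0 < V_\infty \Rightarrow m^\infty_{V_0} < m^\infty_{V_\infty}$.

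\emph{Step 3 ($y \in M$ and strong convergence).} Hence $\{y_n\}$ is bounded and, up to a subsequence, $y_n \to y \in \mathbb R^N$. Re-running the Fatou comparison of Step 2 with $V(y)$ in place of $V_\infty$ (now $V(\varepsilon_n x + y_n) \to V(y)$ pointwise by continuity of $V$), and choosing $s_n > 0$ with $s_n v_n \in \mathcal M_{V(y)}$, one derives
\[
m^\infty_{V_0} \;\geq\; m^\infty_{V(y)} \;\geq\; m^\infty_{V_0},
\]
where the second inequality comes from $V(y) \geq V_0$ and monotonicity of $\mu \mapsto m^\infty_\mu$. Thus $V(y) = V_0$, i.e.\ $y \in M$. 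The equality case throughout forces $s_n \to 1$ and, through a Brezis--Lieb splitting of $\|\cdot\|^{2}_{\varepsilon_n}$ and $\int F(\cdot)$ (together with Lemma \ref{propiedades}$(vi)$ for the Poisson term), we deduce $v_n \to v$ strongly in $H^s(\mathbb R^N)$.

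\textbf{Main obstacle.} The hardest point is Step 2 in the case $V_\infty < \infty$, where one must simultaneously handle: (i) the translated potential via Fatou and \eqref{eq:conseqV1}; (ii) the non-local Poisson term, which vanishes thanks to the scaling $\varepsilon_n^{\alpha-\theta}\to 0$ (the assumption $\alpha>\theta$ is essential here); and (iii) the Nehari scaling factor, whose bound $\limsup t_n\leq 1$ relies on \eqref{f_{5}}. The final strong convergence in $H^s$ is also somewhat subtle, since $v$ lives on $\mathcal M_{V(y)}$ rather than on any of the manifolds $\mathcal N_{\varepsilon_n}$.
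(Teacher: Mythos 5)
Your Step 1 matches the paper's argument (boundedness from \eqref{f_{4}}, non-vanishing via the Lions lemma plus the Nehari identity and $m^{\infty}_{V_{0}}>0$), but the order of your Steps 2 and 3 hides a genuine gap. In Step 2 (case $V_{\infty}<\infty$) the inequality $\lim I_{\varepsilon_{n}}(u_{n})\geq \liminf E_{V_{\infty}}(t_{n}v_{n})$ requires
\[
\int_{\mathbb R^{N}}\bigl(V(\varepsilon_{n}z+y_{n})-V_{\infty}\bigr)v_{n}^{2}\geq o_{n}(1),
\]
and \eqref{eq:conseqV1} only gives $V(\varepsilon_{n}z+y_{n})\geq V_{\infty}-\xi$ on a ball $\{|z|\leq\rho_{n}\}$ with $\rho_{n}\to\infty$; on its complement you only know $V\geq V_{0}$, and with merely weak convergence of $v_{n}$ you cannot exclude that a fixed amount of $L^{2}$-mass of $v_{n}$ escapes to $\{|z|>\rho_{n}\}$, where the integrand is bounded below only by $(V_{0}-V_{\infty})v_{n}^{2}<0$. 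The same objection applies to the "$\limsup t_{n}\leq1$" step: in Lemma~\ref{gio9} that claim exploits $v_{n}\rightharpoonup0$ in a \emph{fixed} $W_{\varepsilon}$ (so $v_{n}\to0$ in $L^{2}(B_{\widetilde R})$ for the fixed radius $\widetilde R_{\xi}$), whereas here $\varepsilon_{n}\to0$, the relevant radii blow up, and $u_{n}$ need not go weakly to zero. Fatou alone compares with $\int V_{\infty}v^{2}$, not with the term $\int V_{\infty}v_{n}^{2}$ appearing in $E_{V_{\infty}}(t_{n}v_{n})$, so it does not close the argument either.

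The paper avoids this by reversing the order: it first compares with the problem at level $V_{0}$, where the inequality $V_{0}\leq V(\varepsilon_{n}z)$ holds \emph{globally}, to get $E_{V_{0}}(t_{n}v_{n})\to m^{\infty}_{V_{0}}$ (its \eqref{eq:EV0}); then $\{t_{n}v_{n}\}$ is a minimizing sequence on $\mathcal M_{V_{0}}$ and Lemma~\ref{gio16} yields \emph{strong} $H^{s}$ convergence $t_{n}v_{n}\to\tilde v$ (no further translation being needed since the weak limit is nonzero). Only with this strong convergence in hand does the comparison with $V_{\infty}$ (or with $V(y)$) become legitimate, because the tail $\int_{|z|>\rho_{n}}v_{n}^{2}$ is then $o_{n}(1)$. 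Your Step 3 also replaces this essential compactness input by an unproved "Brezis--Lieb splitting"; the strong convergence is exactly the content of Lemma~\ref{gio16} and cannot be waved through. To repair your proof, insert the $E_{V_{0}}$-comparison and the application of Lemma~\ref{gio16} before any argument involving $V_{\infty}$ or $V(y)$.
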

Recall that $M$ is the set where $V$ achieves the minimum $V_{0}$.
\begin{proof}
We begin by showing that  $ \left\{ u_n\right\}$ is  bounded  in $H^s_{V_{0}}( \mathbb R^N)$.
By  assumptions,  $I_{\varepsilon_{n}}'( u_n) [ u_n] =0$ and $I_{\varepsilon_{n}}(u_{n})\rightarrow m_{V_{0}}^{\infty}$ write as
\begin{equation}\label{eq:ne}
\norm{u_n}_{\varepsilon_n}^2+\int_{\mathbb R^N}\phi_{\varepsilon_n, u_n}u_n^2=\int_{\mathbb R^N} f( u_n)u_n
\end{equation}
and
\[
\frac{1}{2}\norm{u_n}_{\varepsilon_n}^2+\frac{1}{4}\int_{\mathbb R^N}\phi_{\varepsilon_n, u_n}u_n^2-\int_{\mathbb R^N}F( u_n)=m_{ V_0}^{\infty}+o_n( 1)
\]
which combined together give
\[
\frac{1}{4}\int_{\mathbb R^N} f( u_n)u_n-\int_{\mathbb R^N}F( u_n)=\frac{1}{4}\left( \norm{u_n}_{\varepsilon_n}^2+\int_{\mathbb R^N}\phi_{\varepsilon_n, u_n}u_n^2\right)-\int_{\mathbb R^N}F( u_n) \leq m_{V_0}^{\infty}+o_n\left( 1\right).
\]
Using \eqref{f_{4}} we get
\[
0\leq \Big( \frac{1}{4}-\frac{1}{K}\Big)\int_{\mathbb R^N} f( u_n)u_n \leq m_{ V_0}^{\infty}+o_n( 1),
\]
and therefore, coming back to \eqref{eq:ne},  for some positive constant $C$ (independent on $n$)
\begin{equation}\label{acotacion}
\norm{u_n}_{H^{s}_{V_0}}\leq \norm{u_n}_{\varepsilon_n} \leq C.
\end{equation}

\medskip
We prove the following \medskip

\noindent {\bf Claim: } there exists $\left\{ \tilde{y}_n\right\} \subset \mathbb R^N$ and $R,c>0$ such that
$\liminf _{n\rightarrow \infty} \int_{B_R\left( \tilde{y}_n\right)}u_n^2 \geq c>0.$ \smallskip

\noindent Indeed, if it were not the case then
\[
\lim_{n\rightarrow \infty} \sup_{y\in \mathbb R^N} \int_{B_R\left( y\right)}u_n^2=0,\quad \text{for every $R>0$}.
\]
By Lemma \ref{gio5}, $u_n\rightarrow 0$ in $L^p( \mathbb R^N)$, for $2<p<2^*_{s}$
and then
\[
\int_{\mathbb R^N} f( u_n)u_n \rightarrow 0.
\]
Therefore $\norm{u_n}_{\varepsilon_n}^2+\int_{\mathbb R^N}\phi_{\varepsilon_n, u_n}u_n^2=o_n( 1)$, and also from
\[
0\leq \int_{\mathbb R^N} F( u_n) \leq \frac{1}{K}\int_{\mathbb R^N} f( u_n)u_n
\]
we have $\int_{\mathbb R^N} F( u_n)=o_n( 1)$. But then 
$\lim _{n\rightarrow \infty} I_{\varepsilon_n}( u_n) =m_{ V_0}^{\infty}=0$ which is a contradiction
and proves  our claim.

\medskip

Then the sequence $v_{n}:= u_{n}(\cdot + \tilde{y}_{n})$  is also bounded in $H^s( \mathbb R^N)$
and 
\begin{equation}\label{eq:convergenza}
v_n \rightharpoonup v\not\equiv0\quad \text{ in } \quad H^s( \mathbb R^N)
\end{equation}
since
\begin{equation*}\label{eq:nonulla}
\int_{B_R}v^2=\liminf_{n\rightarrow \infty} \int_{B_R} v_n^2=\liminf_{n\rightarrow \infty} \int_{B_R\left( \tilde{y}_n\right)} u_n^2\geq c>0,
\end{equation*}
by  the claim.

\medskip

Let now $t_{n}>0$ be such that $\tilde{v}_{n}:=t_{n}v_{n}\in {\cal{M}}_{V_{0}}$; the next step is to prove that
\begin{equation}\label{eq:EV0}
E_{V_{0}}(\tilde{v}_{n})\rightarrow m^{\infty}_{V_{0}}.
\end{equation} For this, note that
\begin{align*}
m_{ V_0}^{\infty} &\leq E_{V_0} ( \tilde{v}_n)=\frac{1}{2}\norm{\tilde{v}_n}_{V_0}^2-\int_{\mathbb R^N} F( u_n)\\
&=\frac{t_n^2}{2}\int_{\mathbb R^N} \Big[\abs{(-\Delta )^{s/2}u_n( x+\tilde{y}_n)}^2+V_0u_n^2( x+\tilde{y}_n) \Big]dx-\int_{\mathbb R^N} F( t_nu_n( x+\tilde{y}_n))dx\\
&=\frac{t_n^2}{2}\int_{\mathbb R^N} \abs{(-\Delta )^{s/2}u_n( z)}^2dz+\frac{t_n^2}{2}\int_{\mathbb R^N}V_0u_n^2(z)dz-\int_{\mathbb R^N} F( t_nu_n( z))dz\\
&\leq \frac{t_n^2}{2}\int_{\mathbb R^N} \abs{(-\Delta )^{s/2}u_n}^2+\frac{t_n^2}{2}\int_{\mathbb R^N}V( \varepsilon_nz)u_n^2+\frac{t_n^4}{4}\int_{\mathbb R^N}\phi_{\varepsilon_n, u_n}u_n^2-\int_{\mathbb R^N} F( t_nu_n)
\\ &= I_{\varepsilon_{n}}(t_{n}u_{n})
\end{align*}
and then
\[
m_{ V_0}^{\infty}\leq E_{V_{0}}(\tilde v_{n})\leq I_{\varepsilon_n}( t_nu_n)\leq I_{\varepsilon_n}( u_n)=m_{ V_0}^{\infty}+o_n( 1)
\]
which proves \eqref{eq:EV0}.

\medskip

We can prove now that $v_n\rightarrow v$ in $H^s( \mathbb R^N)$. 
As in the first part of the proof (where we proved the boundedness of $\{u_{n}\}$ in $H^{s}_{V_{0}}(\mathbb R^{N})$), it is easy to see that
\begin{equation*}\label{eq:bo}
 \left\{ \tilde{v}_n\right\} \subset {\cal M}_{V_0} \quad \text{and } \quad E_{V_{0}}(\tilde{v}_{n})\rightarrow m^{\infty}_{V_{0}}
 \  \Longrightarrow \ \norm{\tilde{v}_n}_{H^{s}_{V_{0}}}\leq C
\end{equation*}
and an analogous claim as before holds for the sequence $\{\tilde v_{n}\}$.
%
%
 Then $\tilde{v}_n \rightharpoonup \bar{v}$ in $H^s_{V_{0}}( \mathbb R^N)$ and 
 (as before)
there exists $\delta >0$ such that 
\begin{equation}\label{eq:nonvanish}
 0<\delta \leq \norm{\tilde v_n}_{H^{s}_{V_{0}}}.
\end{equation} This implies
\[
0<t_n\delta \leq \norm{t_nv_n}_{H^{s}_{V_{0}}}=\norm{\tilde{v}_n}_{H^{s}_{V_{0}}}\leq C, 
\]
showing that, up to subsequence,
$t_n\rightarrow t_0\geq 0$.  
If now $t_0=0$ using \eqref{acotacion} 
we derive
\[
0\leq \norm{\tilde{v}_n}_{H^{s}_{V_0}}=t_n \norm{v_n}_{H^{s}_{V_0}}\leq t_n C\rightarrow 0,
\]
so that $\tilde{v}_n\rightarrow 0$ in $H^{s}_{V_0}(\mathbb R^{N})$. From this and \eqref{eq:EV0} 
it follows $m_{V_{0}}^{\infty}=0$ which is absurd. So  $t_0>0$.
Then $t_nv_n \rightharpoonup t_0\bar v=:\tilde v$ in $H^s( \mathbb R^N)$
and by \eqref{eq:nonvanish} $\tilde{v}\not\equiv 0$.
%
%
By Lemma \ref{gio16} applied to $\{\tilde v_{n}\}$ we get $\tilde v_{n} \to \tilde v$ in $H^{s}(\mathbb R^{N})$
and then $v_{n} \to \bar v$. By \eqref{eq:convergenza} we deduce $v_{n}\to v$
and the first part of the proposition is proved.

\medskip

We proceed to prove the second part. We first state that $\left\{ y_n\right\}$ is bounded in $\mathbb R^N$
(here $y_{n}=\varepsilon_{n} {\tilde y_{n}}$ with $\tilde y_{n}$ given in the above claim).
Assume the contrary; then 

\medskip

1. if $V_\infty<\infty$, since  $\tilde{v}_n \rightarrow \tilde{v}$ in $H^s( \mathbb R^N)$ and $V_0<V_\infty$,
we have
\begin{align*}
m_{ V_0}^{\infty}&=\frac{1}{2}\norm{\tilde{v}}_{H^{s}_{V_0}}^2-\int_{\mathbb R^N} F( \tilde{v})<\frac{1}{2}\norm{\tilde{v}}_{H^{s}_{V_\infty}}^2-\int_{\mathbb R^N} F( \tilde{v})\\
&\leq \liminf_{n\rightarrow \infty}\frac{1}{2}\int_{\mathbb R^N} \abs{(-\Delta )^{s/2}\tilde{v}_n}^2+\lim_{n\rightarrow \infty}\left( \frac{1}{2}\int_{\mathbb R^N}V( \varepsilon_nx+y_n)\tilde{v}_n^2( x)dx-\int_{\mathbb R^N} F(  \tilde v_n )\right)\\
&=\liminf_{n\rightarrow \infty}\left( \frac{t_n^2}{2}\int_{\mathbb R^N} \abs{(-\Delta )^{s/2}u_n}^2+\frac{t_n^2}{2}\int_{\mathbb R^N}V( \varepsilon_nz)u_n^2-\int_{\mathbb R^N} F( t_nu_n) \right)\\
&\leq \liminf_{n\rightarrow \infty}\left( \frac{1}{2}\norm{t_nu_n}_{\varepsilon_n}^2-\int_{\mathbb R^N} F( t_nu_n) +\frac{t_n^4}{4}\int_{\mathbb R^N}\phi_{\varepsilon_n, u_n}u_n^2\right)
\end{align*}
from which
\begin{equation*}\label{eq:contrad}
m_{ V_0}^{\infty}< \liminf_{n\rightarrow \infty}I_{\varepsilon_n}( t_nu_n)\leq \liminf_{n\rightarrow \infty}I_{\varepsilon_n}( u_n)=m^{\infty}_{V_0}
\end{equation*}
which is a contradiction.  

\medskip

2. If $V_\infty=\infty$, we have
\begin{gather*}
\begin{split}
\int_{\mathbb R^N}V( \varepsilon_nx+y_n)v_n^2( x)dx \leq &\int_{\mathbb R^N} \abs{(-\Delta )^{s/2}v_n( x)}^2dx+\int_{\mathbb R^N}V( \varepsilon_nx+y_n)v_n^2( x)dx\\
&+\int_{\mathbb R^N}\phi_{\varepsilon_n, v_n}( x)v_n^2(x)dx
\end{split}
\\
=\int_{\mathbb R^N} f( v_n( x) )v_n( x)dx,
\end{gather*}
and  by the Fatou's Lemma we obtain the absurd
\[
\infty =\liminf_{n\rightarrow \infty} \int_{\mathbb R^N} f( v_n)v_n=\int_{\mathbb R^N} f( v)v.
\]

Then  $\{ y_n\}$ has to be bounded and we can assume $y_n \rightarrow y\in \mathbb R^N$. If $y\notin M$ then $V_0<V( y)$, and similarly to the computation made in case 1. above (simply replace $V_{\infty}$ with $V(y)$) we have a contradiction.
Hence $y\in M$ and the proof is thereby complete.
%
%
\end{proof}
For $\delta > 0$ (later it will be fixed conveniently)  
let $\eta$ be a smooth nonincreasing cut-off function defined in $[0,\infty)$ such that 
\begin{equation*}\label{eta}
\eta(s)=
\begin{cases}
1 & \mbox{ if } 0 \leq s \leq \delta/2\\
0 & \mbox{ if } s\geq \delta.
\end{cases}
\end{equation*}
Let $\mathfrak w_{V_{0}}$ be a ground state solution given in Lemma \ref{gio16} of problem \eqref{prob4} with $\mu=V_{0}$ and for any $y \in M$, let us define
\begin{eqnarray*}
\Psi_{\varepsilon , y }(x) := \eta (|\varepsilon x - y|)\mathfrak w_{V_{0}}\biggl(\frac{\varepsilon x -y}{\varepsilon}\biggl).
\end{eqnarray*}
Let $t_{\varepsilon}>0$ verifying $\max_{t\geq 0}I_{\varepsilon}(t\Psi_{\varepsilon,y})=I_{\varepsilon}(t_{\varepsilon}\Psi_{\varepsilon ,y}),$
so that $t_{\varepsilon}\Psi_{\varepsilon,y}\in \mathcal N_{\varepsilon}$, and let 
\begin{equation*}\label{Phi}
\Phi_{\varepsilon}: y\in M\mapsto  t_{\varepsilon}\Psi_{\varepsilon ,y}\in \cal{N}_{\varepsilon}.
\end{equation*}
By construction, $\Phi_{\varepsilon}(y)$ has compact support for any $y\in M$ and  it is easy to see that $\Phi_{\varepsilon}$ is a continuous map.

The next result will help us to define a map from $M$ to a suitable sublevel in the Nehari manifold.

\begin{lemma}\label{gio14}
The function $\Phi_{\varepsilon}$ satisfies
\begin{eqnarray*}
\lim_{\varepsilon \rightarrow 0^{+}}I_{\varepsilon}(\Phi_{\varepsilon}(y))=m^{\infty}_{V_{0}}, \ \text{uniformly} \ in \ y \in M.
\end{eqnarray*}
\end{lemma}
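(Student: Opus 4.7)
The plan is to translate by $y/\varepsilon$ to reduce $\Psi_{\varepsilon,y}$ to a fixed profile, and then track the Nehari scaling $t_\varepsilon(y)$ as $\varepsilon\to 0^+$. Set $\hat w_\varepsilon(z) := \eta(\varepsilon\abs{z})\mathfrak w_{V_0}(z)$, so that the change of variables $z=x-y/\varepsilon$ gives $\Psi_{\varepsilon,y}(x)=\hat w_\varepsilon(x-y/\varepsilon)$. Since $0\le \eta(\varepsilon\abs{z})\le 1$ and $\eta(\varepsilon\abs{z})\to 1$ pointwise, dominated convergence yields $\hat w_\varepsilon\to\mathfrak w_{V_0}$ in $H^s(\mathbb R^N)$ as $\varepsilon\to 0^+$.

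Second, I would decompose $I_\varepsilon(\Phi_\varepsilon(y))$ and observe that, by translation invariance of $(-\Delta)^{s/2}$, of $\int F(\cdot)\,dx$ and of the Riesz kernel defining $\phi_{\varepsilon,u}$, three of the four terms are $y$-independent:
\[
\abs{(-\Delta)^{s/2}\Psi_{\varepsilon,y}}_2^2 = \abs{(-\Delta)^{s/2}\hat w_\varepsilon}_2^2 \to \abs{(-\Delta)^{s/2}\mathfrak w_{V_0}}_2^2,
\]
\[
\int_{\mathbb R^N} F(\Psi_{\varepsilon,y})\,dx = \int_{\mathbb R^N}F(\hat w_\varepsilon)\,dz \to \int_{\mathbb R^N} F(\mathfrak w_{V_0})\,dz,
\]
and by \eqref{eq:esaseusa} with $\alpha>\theta$,
\[
0\le \int_{\mathbb R^N}\phi_{\varepsilon,\Psi_{\varepsilon,y}}\Psi_{\varepsilon,y}^2\,dx \le \varepsilon^{\alpha-\theta}C_{\mathrm e}\norm{\hat w_\varepsilon}^4 \to 0.
\]
The only $y$-dependent piece is $\int V(\varepsilon z+y)\hat w_\varepsilon^2(z)\,dz$. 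Here the compactness of $M$ is crucial: hypothesis \eqref{V} with $V_\infty>V_0$ forces $M\subset B_R$ for some $R$, so $M$ is compact and $V$ is uniformly continuous on a neighbourhood of $M$. Splitting the integration into $\{\abs{\varepsilon z}\le\tau\}$ (where $\abs{V(\varepsilon z+y)-V_0}<\xi$ for $\tau$ small, uniformly in $y\in M$) and its complement inside $\{\abs{\varepsilon z}\le \delta\}$ (which contributes $o(1)$ by $\mathfrak w_{V_0}\in L^2$), one obtains $\int V(\varepsilon z+y)\hat w_\varepsilon^2\,dz\to V_0\abs{\mathfrak w_{V_0}}_2^2$ uniformly in $y\in M$. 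Consequently $\norm{\Psi_{\varepsilon,y}}_\varepsilon^2\to \norm{\mathfrak w_{V_0}}_{H^s_{V_0}}^2$ uniformly.

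Third, I would analyse $t_\varepsilon=t_\varepsilon(y)>0$. With $g(s):=f(s)/s^3$ (strictly increasing on $(0,+\infty)$ by \eqref{f_{5}}, with $\lim_{s\to 0^+}g(s)<+\infty$ and $\lim_{s\to +\infty}g(s)=+\infty$ thanks to \eqref{f_{4}}), the condition $\Phi_\varepsilon(y)\in\mathcal N_\varepsilon$ becomes
\[
\frac{\norm{\Psi_{\varepsilon,y}}_\varepsilon^2}{t_\varepsilon^2} + \int_{\mathbb R^N}\phi_{\varepsilon,\Psi_{\varepsilon,y}}\Psi_{\varepsilon,y}^2 = \int_{\mathbb R^N} g(t_\varepsilon\Psi_{\varepsilon,y})\Psi_{\varepsilon,y}^4.
\]
If $t_{\varepsilon_n}(y_n)\to 0$ the left side explodes while the right stays controlled by $\int g(\Psi_{\varepsilon_n,y_n})\Psi^4$; if $t_{\varepsilon_n}(y_n)\to+\infty$ the right side explodes by Fatou while the left remains bounded. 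Hence $\{t_\varepsilon(y)\}$ is pinched in a compact subset of $(0,+\infty)$ uniformly in $y\in M$. Passing to the limit along any sequence $\varepsilon_n\to 0$, $y_n\to y^\star\in M$, and using the identity $\norm{\mathfrak w_{V_0}}_{H^s_{V_0}}^2=\int g(\mathfrak w_{V_0})\mathfrak w_{V_0}^4$ (from $\mathfrak w_{V_0}\in\mathcal M_{V_0}$), the strict monotonicity of $g$ forces the limit to be $t_0=1$; a standard contradiction argument upgrades this to $t_\varepsilon(y)\to 1$ uniformly in $y\in M$.

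Combining everything,
\[
I_\varepsilon(\Phi_\varepsilon(y))=\tfrac{t_\varepsilon^2}{2}\norm{\Psi_{\varepsilon,y}}_\varepsilon^2+\tfrac{t_\varepsilon^4}{4}\int \phi_{\varepsilon,\Psi_{\varepsilon,y}}\Psi_{\varepsilon,y}^2-\int F(t_\varepsilon\Psi_{\varepsilon,y}) \longrightarrow \tfrac12\norm{\mathfrak w_{V_0}}_{H^s_{V_0}}^2-\int F(\mathfrak w_{V_0})=E_{V_0}(\mathfrak w_{V_0})=m^{\infty}_{V_0},
\]
uniformly in $y\in M$. The main obstacle is precisely this uniformity, which is overcome because $y$ enters $I_\varepsilon(\Phi_\varepsilon(y))$ only through the potential term $\int V(\varepsilon z+y)\hat w_\varepsilon^2\,dz$, and $M$ is compact.
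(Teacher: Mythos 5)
Your proposal is correct and follows essentially the same route as the paper: term-by-term convergence of the functional after recentering at $y/\varepsilon$, vanishing of the Poisson term via the bound $\int\phi_{\varepsilon,u}u^2\leq\varepsilon^{\alpha-\theta}C_{\mathrm e}\|u\|^4$, and the Nehari identity together with \eqref{f_{4}}--\eqref{f_{5}} to pin $t_\varepsilon\to 1$. The only cosmetic difference is that the paper packages uniformity in $y$ as a contradiction argument along sequences $\{y_n\}\subset M$, $\varepsilon_n\to 0^+$, whereas you obtain it directly from the compactness of $M$ and the uniform continuity of $V$ near $M$; both are valid.
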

\begin{proof}
Suppose by contradiction that the lemma is false. Then there exist $\delta_{0}> 0$, $\{y_{n}\} \subset M$ and $\varepsilon_{n}\rightarrow 0^{+}$ such that
\begin{eqnarray}\label{1eq7}
| I_{\varepsilon_{n}}(\Phi_{\varepsilon_{n}}(y_{n}))- m^{\infty}_{V_{0}}|
\geq \delta _{0}.
\end{eqnarray}
Using Lebesgue's Theorem, we have
\begin{eqnarray}\label{eq:1aconv}
\lim_{n\rightarrow \infty} \norm{\Psi_{{\varepsilon_n},y_n}}_{\varepsilon_n}^2 &= &\norm{\mathfrak w_{V_0}}_{H^{s}_{V_0}}^2\,,\\
\lim_{n\rightarrow \infty} \int_{\mathbb R^N}F\left( \Psi_{\varepsilon_n,y_n}\right) &=& \int_{\mathbb R^N}F\left( \mathfrak w_{V_0} \right) \nonumber\,, \\
\lim_{n\rightarrow \infty} \norm{\Psi_{{\varepsilon_n},y_n}}_{H^{s}_{V_0}}^2 &= &
\norm{\mathfrak w_{V_0}}_{H^{s}_{V_0}}^2 \nonumber.
\end{eqnarray}

This last convergence implies that $\{\norm{\Psi_{{\varepsilon_n},y_n}}\}$ is bounded. From \eqref{terminoconvolucion}
\[
\int_{\mathbb R^N} \phi_{\varepsilon_n, \Psi_{\varepsilon_n,y_n}}\Psi_{\varepsilon_n,y_n}^2\leq \varepsilon_n^{\alpha -\theta}C_{\textrm e}\norm{\Psi_{\varepsilon_n,y_n}}^4,
\]
and then
\begin{equation}\label{eq:convazero}
\lim_{n\rightarrow \infty}\int_{\mathbb R^N} \phi_{\varepsilon_n, \Psi_{\varepsilon_n,y_n}}\Psi_{\varepsilon_n,y_n}^2=0.
 \end{equation}

Remembering that $t_{\varepsilon_n}\Psi_{\varepsilon_n,y}\in \mathcal N_{\varepsilon_n}$
(see few lines before the Lemma),
 the condition
$I'_{\varepsilon_n}( t_{\varepsilon_n}\Psi_{\varepsilon_n,y_n} ) [ t_{\varepsilon_n}\Psi_{\varepsilon_n,y_n} ]=0 $
means
\begin{equation}\label{terminoderivada}
\norm{\Psi_{{\varepsilon_n},y_n}}_{\varepsilon_n}^2+t_{\varepsilon_n}^2\int_{\mathbb R^N} \phi_{\varepsilon_n, \Psi_{\varepsilon_n,y_n}}\Psi_{\varepsilon_n,y_n}^2=\int_{\mathbb R^N}\frac{f\left(t_{\varepsilon_n}\Psi_{\varepsilon_n,y_n} \right)}{t_{\varepsilon_n}\Psi_{\varepsilon_n,y_n}}\Psi_{\varepsilon_n,y_n}^2.
\end{equation}
We now prove the following \medskip

\noindent {\bf Claim: }$\lim_{n\to +\infty}t_{\varepsilon_{n}} =1.$ \smallskip

 We begin by showing the boundedness of $\{t_{\varepsilon_{n}}\}$.
 Since $\varepsilon_n \rightarrow 0^{+}$,  we can assume $\delta/2<\delta/( 2\varepsilon_n)$ and then from \eqref{terminoderivada}, using \eqref{f_{5}} and making the change of variable $z:=( \varepsilon_nx -y_n)/\varepsilon_n$, we get
\begin{equation}\label{acotaciont_n}
\frac{\norm{\Psi_{{\varepsilon_n},y_n}}_{\varepsilon_n}^2}{t_{\varepsilon_n}^2}+\int_{\mathbb R^N} \phi_{\varepsilon_n, \Psi_{\varepsilon_n,y_n}}\Psi_{\varepsilon_n,y_n}^2 \geq \frac{f\left( t_{\varepsilon_n}\mathfrak w_{V_0}\left( \overline{z}\right)\right)}{\left( t_{\varepsilon_n}\mathfrak w_{V_0}\left( \overline{z}\right) \right)^3} \int_{B_{\delta/2}} \mathfrak w_{V_0}^4\left( z\right),
\end{equation}
where $\mathfrak w_{V_0}( \overline{z}):= \min_{{B}_{\delta/2}}\mathfrak w_{V_0}( z)$. 
If $\{t_{\varepsilon_{n}}\}$ were unbounded,
passing to the limit in $n$ in \eqref{acotaciont_n}, the left hand side would tend to $0$
(due to \eqref{eq:1aconv} and \eqref{eq:convazero}), the right hand side to $+\infty$
(due to \eqref{f_{4}}).
So we can assume that $t_{\varepsilon_n} \rightarrow t_0\geq 0$. 


For given $\xi >0$, by \eqref{terminof}, there exists $M_{\xi}>0$ such that
\begin{equation}\label{cerot0}
\int_{\mathbb R^N}\frac{f\left(t_{\varepsilon_n}\Psi_{\varepsilon_n,y_n} \right)}{t_{\varepsilon_n}\Psi_{\varepsilon_n,y_n}}\Psi_{\varepsilon_n,y_n}^2\leq \xi \int_{\mathbb R^N}\Psi_{\varepsilon_n,y_n}^2+M_\xi t_{\varepsilon_n}^{q-1}\int_{\mathbb R^N}\Psi_{\varepsilon_n,y_n}^{q+1}.
\end{equation}
Since  $\{\Psi_{\varepsilon_n,y_n} \}$ is bounded  in $H^s( \mathbb R^N)$, if $t_{0}=0$,
from \eqref{cerot0} we deduce
\[
\lim_{n\rightarrow \infty} \int_{\mathbb R^N}\frac{f\left(t_{\varepsilon_n}\Psi_{\varepsilon_n,y_n} \right)}{t_{\varepsilon_n}\Psi_{\varepsilon_n,y_n}}\Psi_{\varepsilon_n,y_n}^2=0,
\]
which joint  with \eqref{eq:convazero} and \eqref{terminoderivada} led to 
$
\lim_{n\rightarrow \infty} \norm{\Psi_{{\varepsilon_n},y_n}}_{\varepsilon_n}^2=0
$
 contradicting \eqref{eq:1aconv}.
Then $t_{\varepsilon_{n}}\to t_{0}>0$.
Now taking the limit in $n$ in \eqref{terminoderivada} we arrive at 
\[
\norm{\mathfrak w_{V_0}}_{H^{s}_{{V_0}}}^2=\int_{\mathbb R^N} \frac{f( t_0 \mathfrak w_{V_0})}{t_0}\mathfrak w_{V_0},
\]
and since $\mathfrak w_{V_0} \in {\cal M}_{V_0}$, it has to be $t_0=1$, which proves the claim. 

\medskip

Finally, note that
\begin{eqnarray*}
I_{\varepsilon_{n}}(\Phi_{\varepsilon_{n}}(y_{n}))&=&
\frac{t^{2}_{\varepsilon_{n}}}{2}\int_{\mathbb R^{N}}\abs{(-\Delta)^{s/2}\Psi_{\varepsilon_n,y_n}}^2 + \frac{t^{2}_{\varepsilon
_{n}}}{2}\int_{\mathbb R^{N}}V(\varepsilon_n x)\Psi_{\varepsilon_n,y_n}^2 \\
&+& \frac{t_{\varepsilon_n}^4}{4}\int_{\mathbb R^{N}}\phi_{\varepsilon_n, \Psi_{\varepsilon_n,y_n}}\Psi_{\varepsilon_n,y_n}^2 -\int_{\mathbb R^{N}}F\left(t_{\varepsilon_n} \Psi_{\varepsilon_n,y_n}\right).
\end{eqnarray*}
and then (by using the claim) $\lim_{n\rightarrow\infty}I_{\varepsilon_{n}}(\Phi_{\varepsilon_{n}}(y_{n}))=E_{V_{0}}(\mathfrak w_{V_{0}})=m^{\infty}_{{V_{0}}}$, which contradicts (\ref{1eq7}). Thus the Lemma holds.
\end{proof}
The remaining part of the paper mainly follows the arguments of \cite{gaetanogiovany}.

By Lemma \ref{gio14}, $h(\varepsilon):=|I_{\varepsilon}(\Phi_{\varepsilon}(y))-m^{\infty}_{V_{0}}|=o(1)$ for $\varepsilon\to 0^{+}$ uniformly in $y$, and then $I_{\varepsilon}(\Phi_{\varepsilon}(y))-m^{\infty}_{V_{0}}\leq h(\varepsilon)$. In particular the sublevel set in the Nehari manifold
\begin{equation*}\label{subnehari}
{\cal{N}}^{m^{\infty}_{V_{0}}+h(\varepsilon)}_{\varepsilon}:=\Big\{u\in{\cal{N}}_{\varepsilon}:I_{\varepsilon}(u)\leq m^{\infty}_{V_{0}}+h(\varepsilon)\Big\}
\end{equation*}
is not empty, since 
for sufficiently small $\varepsilon$,
\begin{equation}\label{phiepsilon}
\forall\, y\in M: \Phi_{\varepsilon}(y)\in {\cal{N}}^{m^{\infty}_{V_{0}}+h(\varepsilon)}_{\varepsilon}.
\end{equation}


From now on we fix a  $\delta >0$ in such a way that $M$ and $$M_{2\delta}:=\Big\{x\in \mathbb R^{N}: d(x,M)\leq 2\delta\Big\}$$ are homotopically equivalent ($d$ denotes the euclidean distance). Take a $\rho=\rho(\delta) > 0$  such that $M_{2\delta}\subset B_{\rho}$ and $\chi : \mathbb R^{N}\rightarrow \mathbb R^{N}$ be defined as follows
\begin{equation*}\label{chi}
\chi(x)= 
\begin{cases}
x & \mbox{ if } | x | \leq \rho\\
\rho \dis\frac{x}{|x|} & \mbox{ if } | x | \geq \rho.
\end{cases}
\end{equation*}
Define the {\sl barycenter map} $\beta_{\varepsilon}$ 
$$
\beta_{\varepsilon}(u):=\frac{\dis\int_{\mathbb R^{N}}\chi(\varepsilon x)
u^{2}(x)}{\dis\int_{\mathbb R^{N}}u^{2}(x)}\in \mathbb R^{N}
$$
for all $u\in W_{\varepsilon}$ with compact support.

We will take advantage of the  following results (see \cite[Lemma 8 and 9]{gaetanogiovany}).
\begin{lemma}\label{gio15} The function $\beta_{\varepsilon}$ satisfies
\begin{eqnarray*}
\lim_{\varepsilon \rightarrow 0^{+}}\beta_{\varepsilon}(\Phi_{\varepsilon}(y)) = y, \ \
\text{uniformly in }  y \in M.
\end{eqnarray*}
\end{lemma}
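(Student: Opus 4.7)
\medskip

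\noindent\textbf{Proof plan for Lemma \ref{gio15}.}

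My plan is to proceed by contradiction and reduce the statement to a dominated-convergence computation after the natural change of variable $z=(\varepsilon x-y)/\varepsilon$. Assume the conclusion fails: there exist $\delta_0>0$, a sequence $\varepsilon_n\to 0^+$ and points $y_n\in M$ such that $|\beta_{\varepsilon_n}(\Phi_{\varepsilon_n}(y_n))-y_n|\geq\delta_0$. Since $\Phi_{\varepsilon_n}(y_n)=t_{\varepsilon_n}\Psi_{\varepsilon_n,y_n}$, the factor $t_{\varepsilon_n}^2$ cancels in the ratio defining $\beta_{\varepsilon_n}$; writing $\Psi_{\varepsilon_n,y_n}^2(x)=\eta(|\varepsilon_n x-y_n|)^2\mathfrak w_{V_0}^2((\varepsilon_n x-y_n)/\varepsilon_n)$ and performing the substitution $z=(\varepsilon_n x-y_n)/\varepsilon_n$ I obtain the identity
\[
\beta_{\varepsilon_n}(\Phi_{\varepsilon_n}(y_n))-y_n
=\frac{\displaystyle\int_{\mathbb R^N}\bigl[\chi(\varepsilon_n z+y_n)-y_n\bigr]\,\eta(|\varepsilon_n z|)^2\,\mathfrak w_{V_0}^2(z)\,dz}{\displaystyle\int_{\mathbb R^N}\eta(|\varepsilon_n z|)^2\,\mathfrak w_{V_0}^2(z)\,dz}.
\]

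The next step is to pass to the limit in numerator and denominator separately. Since $M\subset M_{2\delta}\subset B_\rho$, the sequence $\{y_n\}$ is bounded, so up to a subsequence $y_n\to y_0\in\overline M\subset B_\rho$; note the integrand in the denominator is dominated by $\mathfrak w_{V_0}^2\in L^1(\mathbb R^N)$ and converges pointwise to $\mathfrak w_{V_0}^2$, whence by dominated convergence the denominator tends to $|\mathfrak w_{V_0}|_2^2>0$. For the numerator the integrand is dominated by $2\rho\,\mathfrak w_{V_0}^2\in L^1(\mathbb R^N)$ because $|\chi|\leq\rho$ and $|y_n|\leq\rho$. For each fixed $z$, eventually $\varepsilon_n z+y_n\in B_\rho$ (since $y_n\to y_0$ lies strictly inside $B_\rho$ or on its boundary; in any event $\chi$ is continuous and $\chi(y_0)=y_0$), so $\chi(\varepsilon_n z+y_n)-y_n\to\chi(y_0)-y_0=0$. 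Dominated convergence then yields convergence of the numerator to $0$, and therefore $\beta_{\varepsilon_n}(\Phi_{\varepsilon_n}(y_n))-y_n\to 0$, contradicting the assumption $|\beta_{\varepsilon_n}(\Phi_{\varepsilon_n}(y_n))-y_n|\geq\delta_0$.

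The main technical point I expect to need care with is the uniform-in-$y$ dominating function for the numerator: boundedness of $M$ (equivalently $M\subset B_\rho$) is what keeps $|y_n|$ controlled and thus gives the uniform bound $|\chi(\varepsilon_n z+y_n)-y_n|\leq 2\rho$; without this the Lebesgue majorant would depend on $y$ and the argument would have to be localised differently. A secondary subtlety is showing $\chi(y_0)=y_0$, but this is automatic from $y_0\in\overline M\subset\overline{B_\rho}$ since one may choose $\rho$ strictly larger than the diameter of $M_{2\delta}$ so that $\overline M$ lies in the interior of $B_\rho$ where $\chi$ is the identity; I would incorporate this slight enlargement of $\rho$ at the very beginning to make the final limit identification trivial.
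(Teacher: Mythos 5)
Your argument is correct. The paper does not prove this lemma itself --- it simply quotes \cite[Lemma 8]{gaetanogiovany} --- and the proof there is essentially the one you give: the factor $t_\varepsilon^2$ cancels in the quotient, the change of variables $z=(\varepsilon x-y)/\varepsilon$ reduces everything to integrals of $\eta(|\varepsilon z|)^2\mathfrak w_{V_0}^2(z)$, and dominated convergence (with $|\chi|\leq\rho$, $|y|\leq\rho$ furnishing the majorant, and $|\mathfrak w_{V_0}|_2^2>0$ as the limit of the denominator) gives the uniform limit via the contradiction sequence. One small remark: the enlargement of $\rho$ you propose at the end is unnecessary, since $M_{2\delta}\subset B_{\rho}$ already forces $M\subset B_{\rho-2\delta}$, so the limit point $y_0$ automatically lies in the region where $\chi$ is the identity.
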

\begin{lemma}\label{gio18}
We have
\begin{eqnarray*}
\lim_{\varepsilon \rightarrow 0^{+}}\ \sup_{u \in\cal{N}^{m^{\infty}_{V_{0}}+h(\varepsilon)}_{\varepsilon}} \ \inf_{y \in M_{\delta}}\Big| \beta_{\varepsilon} (u)- y \Big|  = 0.
\end{eqnarray*}
\end{lemma}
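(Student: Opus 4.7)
The plan is to argue by contradiction and invoke the concentration result of Proposition \ref{gio17}.

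Suppose the claim fails: then there exist $\delta_0>0$, a sequence $\varepsilon_n\to 0^+$, and $u_n\in \mathcal N^{m^{\infty}_{V_0}+h(\varepsilon_n)}_{\varepsilon_n}$ such that
\begin{equation*}
\inf_{y\in M_\delta}\bigl|\beta_{\varepsilon_n}(u_n)-y\bigr|\geq \delta_0,\qquad \forall n.
\end{equation*}
Since $u_n\in\mathcal N_{\varepsilon_n}$, we have $m^{\infty}_{V_0}\leq m_{\varepsilon_n}\leq I_{\varepsilon_n}(u_n)\leq m^{\infty}_{V_0}+h(\varepsilon_n)$, and because $h(\varepsilon_n)\to 0$ by Lemma \ref{gio14}, we deduce $I_{\varepsilon_n}(u_n)\to m^{\infty}_{V_0}$.

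Thus $\{u_n\}$ is in the position to apply Proposition \ref{gio17}: up to a subsequence there is $\{\tilde y_n\}\subset\mathbb R^N$ such that $v_n:=u_n(\cdot+\tilde y_n)$ converges strongly to some $v\not\equiv 0$ in $H^s(\mathbb R^N)$, and $y_n:=\varepsilon_n\tilde y_n\to y\in M$. The plan is now to show $\beta_{\varepsilon_n}(u_n)\to y$, which will be the desired contradiction since $y\in M\subset M_\delta$.

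Performing the change of variables $z=x-\tilde y_n$ in the definition of the barycenter yields
\begin{equation*}
\beta_{\varepsilon_n}(u_n)=\frac{\dis\int_{\mathbb R^N}\chi(\varepsilon_n z+y_n)\,v_n^2(z)\,dz}{\dis\int_{\mathbb R^N}v_n^2(z)\,dz}.
\end{equation*}
Since $v_n\to v$ strongly in $L^2(\mathbb R^N)$, the denominator converges to $|v|_2^2>0$. For the numerator, observe that $\chi$ is continuous and bounded by $\rho$, that $\varepsilon_n z+y_n\to y$ pointwise, and that $\chi(y)=y$ because $y\in M\subset M_{2\delta}\subset B_\rho$. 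Then writing
\begin{equation*}
\int_{\mathbb R^N}\chi(\varepsilon_n z+y_n)v_n^2\,dz-y\int_{\mathbb R^N}v_n^2\,dz=\int_{\mathbb R^N}\bigl[\chi(\varepsilon_n z+y_n)-\chi(y)\bigr]v_n^2\,dz,
\end{equation*}
and splitting the right-hand side as $\int[\chi(\varepsilon_nz+y_n)-\chi(y)](v_n^2-v^2)\,dz+\int[\chi(\varepsilon_nz+y_n)-\chi(y)]v^2\,dz$, the first integral tends to zero by boundedness of $\chi$ and the $L^1$-convergence $v_n^2\to v^2$, while the second vanishes by dominated convergence (integrand bounded by $2\rho\,v^2\in L^1$). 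Hence $\beta_{\varepsilon_n}(u_n)\to y\in M\subset M_\delta$, contradicting the standing assumption and completing the proof.

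The only mildly delicate point is the passage to the limit inside the numerator; the potential obstacle is that $\{\tilde y_n\}$ could in principle drift, but this is exactly ruled out by the second conclusion of Proposition \ref{gio17}, which pins $y_n=\varepsilon_n\tilde y_n$ to a limit in $M$. Everything else is routine dominated-convergence bookkeeping.
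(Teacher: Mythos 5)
Your proof is correct and follows essentially the same route the paper relies on (the paper does not prove Lemma \ref{gio18} itself but cites \cite{gaetanogiovany}): argue by contradiction, apply Proposition \ref{gio17} to get translations $\tilde y_n$ with $y_n=\varepsilon_n\tilde y_n\to y\in M$, and pass to the limit in the barycenter via the change of variables and dominated convergence, using $\chi(y)=y$. The only cosmetic caveat is that the nonvanishing and strong convergence of $v_n=u_n(\cdot+\tilde y_n)$ to $v\not\equiv 0$ is established inside the proof of Proposition \ref{gio17} (or follows since $u_n\in\mathcal N_{\varepsilon_n}$ forces $\|u_n\|_{H^s}$ to stay bounded away from zero) rather than being part of its literal statement.
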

In virtue of  Lemma \ref{gio18}, there exists $\varepsilon^{*}>0$ such that
$$\forall\,\varepsilon\in (0,\varepsilon^{*}]: \ \ \sup_{u\in {\mathcal N}^{m^{\infty}_{V_{0}}+h(\varepsilon)}_{\varepsilon}} d(\beta_{\varepsilon}(u), M_{\delta})<\delta/2.$$

Define now
$$M^{+}:=\Big\{x\in \mathbb R^{N}: d(x,M)\leq 3\delta/2\Big\}$$
so that  $M$ and $M^{+}$ are homotopically equivalent.

Now,  reducing  $\varepsilon^{*}>0$ if necessary,
we can assume that Lemma \ref{gio15},  Lemma \ref{gio18} and \eqref{phiepsilon} hold. Then by standard arguments the composed map
\begin{equation*}\label{fundamental}
M\stackrel{\Phi_{\varepsilon}}{\longrightarrow} {\cal{N}}^{m^{\infty}_{V_{0}}+h(\varepsilon)}_{\varepsilon}\stackrel{\beta_{\varepsilon}}{\longrightarrow}M^{+}
\quad \text{ is homotopic to the inclusion map.}
\end{equation*}

In case $V_{\infty}<\infty$, we eventually reduce $\varepsilon^{*}$ in such a way that also the Palais-Smale condition is satisfied in the interval  $(m^{\infty}_{V_{0}}, m^{\infty}_{V_{0}}+h(\varepsilon))$, see Proposition \ref{gio13}.
By well known properties of the category,
it is
\begin{eqnarray*}
\cat({\cal{N}}^{m^{\infty}_{V_{0}}+h(\varepsilon)}_{\varepsilon})\geq
\cat_{M^{+}}(M)
\end{eqnarray*}
and the Ljusternik-Schnirelman theory   ensures the existence of at least 
$\cat_{M^{+}}(M)=\cat(M)$ constraint critical points of $I_{\varepsilon}$
on ${\cal{N}}_{\varepsilon}$. The proof of the main Theorem \ref{th:main}
then follows by Proposition \ref{gio13}.

If $M$ is bounded and not contractible in itself, then the existence of another critical point
of $I_{\varepsilon}$ on $\mathcal N_{\varepsilon}$ follows from some ideas in \cite{BCP}.
We recall here the main steps for completeness.

The  goal is to  exhibit
a subset $\mathcal A\subset\mathcal N_{\varepsilon}$ such that
\begin{itemize}
\item[i)] $\mathcal A$ is not contractible in $\mathcal N_{\varepsilon}^{m^{\infty}_{V_{0}}+h(\varepsilon)}$, \medskip
\item[ii)] $\mathcal A$ is contractible in $\mathcal N_{\varepsilon}^{\bar c}=\{u\in \mathcal N_{\varepsilon}: I_{\varepsilon}(u)\leq \bar c\}$,  for some $\bar c>m^{\infty}_{V_{0}}+h(\varepsilon)$.
\end{itemize}
This would imply, since the Palais-Smale holds, that there is a critical level between
$m_{V_{0}}^{\infty}+h(\varepsilon)$ and $\bar c$.

\medskip

First note that when $M$ is not contractible and bounded
the compact set $\mathcal A:={\Phi_{\varepsilon}(M)}$ 
can not be contractible in ${\cal{N}}^{m^{\infty}_{V_{0}}+h(\varepsilon)}_{\varepsilon}$, proving i).

Let us denote, for $u\in W_{\varepsilon}\setminus\{0\}$,
with $t_{\varepsilon}(u)>0$ the unique positive number such that $t_{\varepsilon}(u) u\in \mathcal N_{\varepsilon}.$
Choose a function $u^{*}\in W_{\varepsilon}$ be such that $u^{*}\geq 0$,  $I_{\varepsilon}(t_{\varepsilon}(u^{*})u^{*})>m^{\infty}_{V_{0}}+h(\varepsilon)$
and consider the compact and contractible cone
$$\mathfrak C:=\Big\{tu^{*}+(1-t)u: t\in [0,1], u\in\mathcal A \Big\}.$$
Observe that, since the functions in  $\mathfrak C$
have to be positive on a set of nonzero measure, it is  $0\notin \mathfrak C$.
Now we project this cone on $\mathcal N_{\varepsilon}$: let
$$t_{\varepsilon}(\mathfrak C):=\Big\{t_{\varepsilon}(w)w: w\in \mathfrak C\Big\}\subset \mathcal N_{\varepsilon}$$
and set
$$\overline c:=\max_{t_{\varepsilon}(\mathfrak C)}I_{\varepsilon}>m^{\infty}_{V_{0}}+h(\varepsilon)$$
(indeed the maximum is achieved being $t_{\varepsilon}(\mathfrak C)$ compact).
Of course $\mathcal A\subset t_{\varepsilon}(\mathfrak C)\subset \mathcal N_{\varepsilon}$
and $t_{\varepsilon}(\mathfrak C)$ is contractible in $\mathcal N^{\bar c}_{\varepsilon}$:
we deduce ii).

Then there is a critical level for $I_{\varepsilon}$ greater than $m^{\infty}_{V_{0}} + h(\varepsilon)$,
hence different from the previous ones we have found. The proof of Theorem \ref{th:main} is complete.

\end{document}